\newtheorem{theorem}{Theorem}[section]
\newtheorem{lemma}[theorem]{Lemma}
\newtheorem{proposition}[theorem]{Proposition}
\theoremstyle{definition}
\newtheorem{definition}[theorem]{Definition}
\numberwithin{equation}{section}
\providecommand{\abs}[1]{\left\lvert{#1}\right\rvert}
\providecommand{\norm}[1]{\left\lVert{#1}\right\rVert}
\newcommand{\dual}[2]{\left\langle{#1}, \, {#2}\right\rangle}
\newcommand{\lr}[1]{\left({#1}\right)}
\newcommand{\lra}[1]{\left\lbrack{#1}\right\rbrack}
\newcommand{\lrb}[1]{\left\lbrace{#1}\right\rbrace}
\newcommand{\calka}[2][x]{\int_{\Omega} {#2} \, \mathrm{d}{#1}}
\newcommand{\tcalka}[2][x]{\int_{\mathbb{T}^3} {#2} \, \mathrm{d}{#1}}
\newcommand{\calkat}[2][t]{\int_{0}^{T} {#2} \, \mathrm{d}{#1}}
\newcommand{\dv}{\operatorname{div}}
\newcommand{\supp}{\mathop{\rm supp}}
\newcommand{\subscript}[2]{$#1 _ #2$}
\begin{document}

\title{Energy Equality for the $3$D Critical Convective Brinkman--Forchheimer Equations}

%    Information for first author
\author{Karol W. Hajduk}
\address{Mathematics Institute, Zeeman Building, University of Warwick, 
Coventry, CV4 7AL, United Kingdom}
\email{k.w.hajduk@warwick.ac.uk}
\thanks{KWH was supported by an EPSRC Standard DTG EP/M506679/1 and by the Warwick Mathematics Institute.}

%    Information for second author
\author{James C. Robinson}
\address{Mathematics Institute, Zeeman Building, University of Warwick, 
Coventry, CV4 7AL, United Kingdom}
\email{j.c.robinson@warwick.ac.uk}
\thanks{JCR was supported in part by an EPSRC Leadership Fellowship EP/G007470/1.}

%    General info
\subjclass[2000]{Primary 35Q35, 76S05; Secondary 76D03}

\date{August 10, 2017.}

%\dedicatory{This paper is dedicated to our advisors.}

\keywords{Brinkman--Forchheimer, Tamed Navier--Stokes, Energy equality, Critical exponent, Global strong solutions, Strong global attractor}

\begin{abstract}
In this paper we give a simple proof of the existence of global-in-time smooth solutions for the convective Brinkman--Forchheimer equations (also called in the literature the tamed Navier--Stokes equations)
$$ \partial_tu -\mu\Delta u + (u \cdot \nabla)u + \nabla p + \alpha u + \beta\abs{u}^{r - 1}u = 0 $$
on a $3$D periodic domain, 
for values of the absorption exponent $r$ larger than $3$. Furthermore, we prove that global, regular solutions exist also for the~critical value of exponent $r = 3$, provided that the coefficients satisfy the relation $4\mu\beta \geq 1$. Additionally, we show that in the critical case every weak solution verifies the energy equality and hence is continuous into the phase space $L^2$. As an application of this result we prove the existence of a strong global attractor, using the theory of evolutionary systems developed by Cheskidov.
\end{abstract}

\maketitle

\section{Introduction}

\makeatletter
    \renewcommand{\theequation}{{\thesection}.\@arabic\c@equation}
    \renewcommand{\thetheorem}{{\thesection}.\@arabic\c@theorem}
    \renewcommand{\thedefinition}{{\thesection}.\@arabic\c@definition}
    \renewcommand{\thecorollary}{{\thesection}.\@arabic\c@corollary}
    \renewcommand{\theproposition}{{\thesection}.\@arabic\c@proposition}
\makeatother

In this paper we consider both weak and strong solutions of the three-dimensional incompressible convective Brinkman--Forchheimer equations (CBF)
\begin{equation} \label{cbfequation}
\left\{\begin{split}
\partial_tu -\mu\Delta u + (u \cdot \nabla)u + \nabla p + \alpha u + \beta\abs{u}^{r - 1}u &= 0, \\
\mbox{div\,} u &= 0, \\
u(x, 0) &= u_0(x),
\end{split}\right.
\end{equation}
where $u(x, t) = (u_1, u_2, u_3)$ is the velocity field and the scalar function $p(x, t)$ is the~pressure. The constant $\mu$ denotes the positive Brinkman coefficient (effective viscosity). The positive constants $\alpha$ and $\beta$ denote respectively the Darcy (permeability of porous medium) and Forchheimer (proportional to the porosity of the~material) coefficients. The exponent $r$ can be greater than or equal to $1$. The domain on which we consider problem $(\ref{cbfequation})$ is a three-dimensional torus $\mathbb{T}^3 = [0, 2\pi]^3$, with periodic boundary conditions and zero mean-value constraint for the functions, i.e.\ $\int_{\mathbb{T}^3}{u(x, t) \, \mathrm{d}x} = 0$. Very similar arguments should work to prove most of the results given here (except perhaps the energy equality result) also for the Cauchy problem, i.e.\ in the case when the domain is the whole space $\mathbb{R}^3$. However, when $\Omega \subset \mathbb{R}^3$ is an~open, bounded domain with Dirichlet boundary conditions $u|_{\partial\Omega} = 0$, not all the~results proved here are that straightforward. In particular, one has to be very careful in choosing an approximation in the proof of Theorem $\ref{twr:ee}$, and we will address this problem in a~future paper.

The CBF equations $(\ref{cbfequation})$ describe the motion of incompressible fluid flows in a saturated porous medium. While the motivation for introducing an absorption term $\abs{u}^{r - 1}u$ is purely mathematical, this model is used in connection with some real world phenomena, e.g.\ in the theory of non-Newtonian fluids as well as tidal dynamics (see \cite{KalantarovZelik}, \cite{RocknerZhang}, \cite{YouZhao} and references therein). However, its applicability is believed to be limited to flows when the velocities are sufficiently high and porosities are not too small, i.e.\ when the Darcy law for a porous medium no longer applies (for more details see \cite{Nield2}, \cite{Nield3} and the discussion in \cite{MarkowichTiti}).

In this paper we use the standard notation for the vector-valued function spaces which often appear in the theory of fluid dynamics. For an arbitrary domain $\Omega \subseteq \mathbb{R}^n$ we define:
\begin{align} \nonumber
C_{0}^{\infty}(\Omega) &:= \lrb{\varphi \in C^{\infty}(\Omega) : \supp{\varphi} \ \mbox{is compact}}, \\ \nonumber
\mathcal{D}_{\sigma}(\Omega) &:= \lrb{\varphi \in C_{0}^{\infty}(\Omega) : \dv{\varphi} = 0}, \\ \nonumber
X_q(\Omega) &:= \mbox{closure of} \ \mathcal{D}_{\sigma}(\Omega) \ \mbox{in the Lebesgue space} \ L^q(\Omega), \\ \nonumber
V^s(\Omega) &:= \mbox{closure of} \ \mathcal{D}_{\sigma}(\Omega) \ \mbox{in the Sobolev space} \ W^{s, 2}(\Omega) \quad \mathrm{for} \quad s > 0.
\end{align}
The space of divergence-free test functions in the space-time domain is denoted by
$$ \mathcal{D}_{\sigma}({\Omega_T}) := \lrb{\varphi \in C_{0}^{\infty}({\Omega_T}) : \dv{\varphi(\cdot, t)} = 0}, $$
where $\Omega_T := \Omega \times [0, T)$ for $T > 0$. Note that $\varphi(x, T) = 0$ for all $\varphi \in \mathcal{D}_{\sigma}({\Omega_T})$.

We denote the Hilbert space $X_2(\Omega)$ by $H$, $V^1(\Omega)$ by $V$ and $V^s(\Omega)$ by $V^s$ for $s \neq 1$. The space $H$ is endowed with the inner product induced by $L^2(\Omega)$. We denote it by $\dual{\cdot}{\cdot}$, and the~corresponding norm is denoted by $\norm{\cdot}$. 

For simplicity, we consider here only the case when there are no external forces acting on the fluid, i.e.\ the right-hand side of $(\ref{cbfequation})$ is equal to zero. All the results proved in this paper hold as well for non-zero forces $f(x, t)$ and the proofs can be carried out in the similar way with some natural assumptions on the regularity of the function $f$. For example, for Theorem $\ref{twr:ee}$ one can assume that $f \in L^1({0, T; H})$.

We consider here primarily the CBF equations when the exponent $r = 3$
\begin{equation} \label{criticalcbf}
\partial_tu -\mu\Delta u + (u \cdot \nabla)u + \nabla p + \alpha u + \beta\abs{u}^{2}u = 0.
\end{equation}
The critical homogenous CBF equations $(\ref{criticalcbf})$ have the same scaling as the Navier--Stokes equations (NSE) only when the permeability coefficient $\alpha $ is equal to zero. In this case the model is sometimes referred to in the literature as the Navier--Stokes equations modified by an absorption term (see e.g.\ \cite{Oliveira}) or the tamed Navier--Stokes equations (see e.g. \cite{RocknerZhang}). We lose the scale-invariance property for other values of the parameters $r$ and $\alpha$. This follows at once from the~following simple proposition.
\begin{proposition}
Let $\Omega$ be the whole space $\mathbb{R}^n$.
Let $u_{\lambda}$ be the usual parabolic rescaling of the velocity field $u$:
$$ u_{\lambda}(x, t) := \lambda u(\lambda x, \lambda^2 t) \quad \mbox{for} \quad \lambda > 0, $$
and let $p_{\lambda}$ be the usual rescaling of the pressure function $p$:
$$ p_{\lambda}(x, t) := \lambda^2 p(\lambda x, \lambda^2 t) \quad \mbox{for} \quad \lambda > 0. $$
If $u$ and $p$ solve the CBF equations $(\ref{cbfequation})$, then the rescaled functions $u_{\lambda}, p_{\lambda}$ satisfy
\begin{equation} \nonumber
\partial_t u_{\lambda} - \mu\Delta u_{\lambda} + (u_{\lambda} \cdot \nabla)u_{\lambda} + \nabla p_{\lambda} + \lambda^{2}\alpha u_{\lambda} + \lambda^{3 - r}\beta\abs{u_{\lambda}}^{r - 1}u_{\lambda} = 0.
\end{equation}
\end{proposition}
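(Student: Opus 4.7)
The proof is a direct substitution and chain-rule exercise, so the plan is simply to compute each term of the rescaled equation and compare scaling powers.

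First, I would compute the derivatives of $u_\lambda$ and $p_\lambda$ by the chain rule. Writing $y = \lambda x$, $s = \lambda^2 t$, we get
$$\partial_t u_\lambda(x,t) = \lambda^3 (\partial_t u)(y,s), \qquad \Delta u_\lambda(x,t) = \lambda^3 (\Delta u)(y,s),$$
and similarly $\nabla u_\lambda = \lambda^2 (\nabla u)(y,s)$, so $(u_\lambda \cdot \nabla) u_\lambda = \lambda^3 ((u\cdot\nabla)u)(y,s)$, and $\nabla p_\lambda = \lambda^3 (\nabla p)(y,s)$. Thus the four leading (scale-invariant) terms all pick up a common factor of $\lambda^3$ relative to the original equation evaluated at $(y,s)$.

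Next I would deal with the two lower-order absorption terms separately. The Darcy term satisfies $\alpha u_\lambda(x,t) = \lambda \alpha u(y,s)$, which is off from $\lambda^3$ by a factor of $\lambda^2$; this is exactly why the coefficient in the rescaled equation is $\lambda^2\alpha$. Similarly, $|u_\lambda|^{r-1}u_\lambda(x,t) = \lambda^r |u|^{r-1}u(y,s)$, which is off from $\lambda^3$ by a factor of $\lambda^{r-3}$; multiplying by $\lambda^{3-r}\beta$ restores the $\lambda^3$ weight. Adding everything together gives
$$\partial_t u_\lambda - \mu\Delta u_\lambda + (u_\lambda\cdot\nabla)u_\lambda + \nabla p_\lambda + \lambda^2\alpha u_\lambda + \lambda^{3-r}\beta|u_\lambda|^{r-1}u_\lambda = \lambda^3 \bigl[\,\text{LHS of (\ref{cbfequation}) at }(y,s)\,\bigr],$$
which vanishes by hypothesis. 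The divergence-free condition is preserved trivially since $\dv u_\lambda(x,t) = \lambda^2 (\dv u)(y,s) = 0$.

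There is no real obstacle here; the only thing worth emphasising in the write-up is the bookkeeping of exponents that produces $\lambda^2$ in front of $\alpha$ and $\lambda^{3-r}$ in front of $\beta$, which makes transparent the earlier assertion that the equation is scale-invariant precisely when $\alpha = 0$ and $r = 3$.
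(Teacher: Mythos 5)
Your computation is correct and is precisely the direct chain-rule substitution that the paper has in mind; the paper itself states this proposition without proof, regarding it as immediate. All exponent bookkeeping checks out (each scale-invariant term carries the common factor $\lambda^3$, while the Darcy and absorption terms are deficient by $\lambda^2$ and $\lambda^{r-3}$ respectively), and your closing remark correctly identifies why scale invariance requires $\alpha = 0$ and $r = 3$.
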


Since the linear term $\alpha u$ poses no additional difficulties in the mathematical treatment of the problem, we set the parameter $\alpha$ to zero. In what follows we will consider only the following equations
\begin{equation} \label{alphazero}
\partial_tu -\mu\Delta u + (u \cdot \nabla)u + \nabla p + \beta\abs{u}^{r - 1}u = 0
\end{equation}
and, consequently, the critical case when $r = 3$
\begin{equation} \label{alphazerocrit}
\partial_tu -\mu\Delta u + (u \cdot \nabla)u + \nabla p + \beta\abs{u}^{2}u = 0.
\end{equation}
All proofs for the full problem $(\ref{cbfequation})$ can be easily rewritten based on the arguments given for equations $(\ref{alphazero})$.

We will use here the following definition of a weak solution.
\begin{definition} \label{defi:slabecbfr}
We will say that the function $u$ is \emph{a weak solution} on the time interval $[0, T)$ of the convective Brinkman--Forchheimer equations $(\ref{alphazero})$ with the~initial condition $u_0 \in H$, if
$$ u \in L^{\infty}({0, T; H}) \cap L^{r + 1}({0, T; X_{r + 1}}) \cap L^2({0, T; V}) $$
and
\begin{align} \nonumber
-\int_{t_0}^{t_1}{\dual{u(s)}{\partial_t\varphi(s)}\, \mathrm{d}s} + \mu\int_{t_0}^{t_1}{\dual{\nabla u(s)}{\nabla \varphi(s)}\, \mathrm{d}s} + \int_{t_0}^{t_1}{\dual{(u(s) \cdot \nabla)u(s)}{\varphi(s)}\, \mathrm{d}s} \\ \nonumber
+ \beta\int_{t_0}^{t_1}{\dual{\abs{u(s)}^{r - 1}u(s)}{\varphi(s)}\, \mathrm{d}s} = -\dual{u(t_1)}{\varphi(t_1)} + \dual{u(t_0)}{\varphi(t_0)}
\end{align}
for all $0 \leq t_0 < t_1 < T$ and all test functions $\varphi \in \mathcal{D}_{\sigma}({\Omega_T})$.
\end{definition}
A function $u$ is called \textit{a global weak solution} if it is a weak solution for all $T > 0$.

\begin{definition}
\textit{A Leray--Hopf weak solution} of the convective Brinkman--\linebreak Forchheimer equations $(\ref{alphazero})$ with the initial condition $u_0 \in H$ is a weak solution satisfying the following \emph{strong energy inequality}:
\begin{align} \label{energyineq}
\norm{u(t_1)}^2 + 2\mu\int_{t_0}^{t_1}{\norm{\nabla u(s)}^2 \, \mathrm{d}s} + 2\beta\int_{t_0}^{t_1}{\norm{u(s)}_{L^{r + 1}(\Omega)}^{r + 1} \, \mathrm{d}s} \leq \norm{u(t_0)}^2
\end{align}
for almost all initial times $t_0 \in [0, T)$, including zero, and all $t_1 \in (t_0, T)$.
\end{definition}

It is known that for every $u_0 \in H$ there exists at least one global Leray--Hopf weak solution of $(\ref{alphazero})$. For the proof of existence of global weak solutions for the case $\alpha = 0$ see \cite{Oliveira}.

We want to recall that for the Navier--Stokes equations $(\alpha = \beta = 0)$ it is well known that for every $u_0 \in H$ there exists at least one global Leray--Hopf weak solution that satisfies the strong  energy inequality:
\begin{equation} \label{star}
\norm{u(t_1)}^2 + 2\mu\int_{t_0}^{t_1}{\norm{\nabla u(s)}^2 \, \mathrm{d}s} \leq \norm{u(t_0)}^2.
\end{equation}
This can be found in many places, e.g.\ in Galdi \cite{Galdi} or in the recent book by Robinson, Rodrigo and Sadowski~\cite{RRSbook}.
However, it is not known if all weak solutions have to verify $(\ref{star})$.~The problem of proving equality in $(\ref{star})$ for weak solutions is also open; there are only partial results in this direction. In particular, it is known that the energy equality is satisfied by more regular weak solutions for which
\begin{equation} \label{serrincond}
u \in L^{p_1}({0, T; L^{p_2}(\Omega)}), \quad \mbox{where} \quad \frac{2}{p_1} + \frac{2}{p_2} \leq 1 \quad \mbox{and} \quad p_2 \geq 4,
\end{equation}
or
$$ \quad p \in L^{2}({0, T; L^2(\Omega)}). $$
This result was established in several papers by Lions \cite{Lions}, Serrin \cite{Serrin}, Shinbrot \cite{Shinbrot} and Kukavica \cite{Kukavica}.

From now on, unless stated otherwise, we will assume that $\Omega$ is the three-dimensional torus $\mathbb{T}^3$.

We now make the observation that weak solutions of the CBF equations $(\ref{alphazerocrit})$ by definition satisfy the condition $(\ref{serrincond})$. This suggests that the energy equality holds for all weak solutions of this problem, and we prove this in the following theorem.
\begin{theorem} \label{twr:ee}
Every weak solution of $(\ref{alphazerocrit})$ with the initial condition $u_0 \in H$ satisfies \emph{the energy equality}:
\begin{align} \label{equality}
\norm{u(t_1)}^2 + 2\mu\int_{t_0}^{t_1}{\norm{\nabla u(s)}^2 \, \mathrm{d}s} + 2\beta\int_{t_0}^{t_1}{\norm{u(s)}_{{L}^{4}(\mathbb{T}^3)}^{4} \, \mathrm{d}s} = \norm{u(t_0)}^2
\end{align}
for all $0 \leq t_0 < t_1 <T$. Hence, all weak solutions are continuous functions into the~phase space $L^2$, i.e.\ $u \in C(\lra{0, T}; H)$.
\end{theorem}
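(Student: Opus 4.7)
The plan is to prove the energy identity~\eqref{equality} by mollifying in time, testing the mollified equation against the solution itself, and passing to the limit. The key observation is that Definition~\ref{defi:slabecbfr} with $r=3$ already places $u\in L^4(0,T;L^4(\mathbb{T}^3))$; this is the Shinbrot condition with equality, and gives $u\otimes u\in L^2(0,T;L^2)$, which is exactly what closes the inertial estimate against $\nabla u\in L^2(0,T;L^2)$.

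Let $\eta_\epsilon$ be a standard Friedrichs mollifier on $\mathbb{R}$ and set $u_\epsilon:=\eta_\epsilon*_t u$. Since mollification in time commutes with every spatial operator, a careful choice of test function in the weak formulation of Definition~\ref{defi:slabecbfr} (namely $\varphi(x,s):=\eta_\epsilon(t-s)\psi(x)$ for $\psi\in\mathcal{D}_\sigma(\Omega)$) shows that, for $t\in(\epsilon,T-\epsilon)$,
\[
\partial_t u_\epsilon-\mu\Delta u_\epsilon+((u\cdot\nabla)u)_\epsilon+\nabla p_\epsilon+\beta(\abs{u}^2u)_\epsilon=0
\]
holds in the distributional sense in space. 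Taking the $H$-inner product with the divergence-free function $u_\epsilon(\cdot,t)\in V$ and integrating over $[t_0,t_1]\subset(\epsilon,T-\epsilon)$ kills the pressure and yields a mollified energy identity. Passing $\epsilon\to0$, the viscous term converges by classical mollifier properties; the absorption term converges because $\abs{u}^2u\in L^{4/3}(0,T;L^{4/3})$ and $u\in L^4(0,T;L^4)$, and its limit equals $\int_{t_0}^{t_1}\norm{u(s)}_{L^4(\mathbb{T}^3)}^4\,\mathrm{d}s$; and the boundary terms $\norm{u_\epsilon(t_i)}^2\to\norm{u(t_i)}^2$ converge at Lebesgue points of $\norm{u}^2$, which include almost every $t_i\in[0,T)$. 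The crucial inertial term, after spatial integration by parts, reads $-\int_{t_0}^{t_1}\dual{(u\otimes u)_\epsilon}{\nabla u_\epsilon}\,\mathrm{d}s$; both factors converge in $L^2(0,T;L^2)$, so the limit equals $-\int_{t_0}^{t_1}\dual{u\otimes u}{\nabla u}\,\mathrm{d}s=\int_{t_0}^{t_1}\dual{(u\cdot\nabla)u}{u}\,\mathrm{d}s$, which vanishes by the divergence-free condition (legitimate pointwise for a.e.\ $t$ since $u(\cdot,t)\in V\subset L^6\subset L^3$). Thus~\eqref{equality} holds for almost every pair $0\leq t_0<t_1<T$.

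To extend the identity to every pair, I would inspect the four terms of~\eqref{alphazerocrit} to obtain $\partial_t u\in L^2(0,T;V^*)+L^{4/3}(0,T;L^{4/3})$, and combine this with $u\in L^2(0,T;V)\cap L^4(0,T;L^4)$ via a Lions--Magenes-type lemma to conclude $u\in C([0,T];H)$; both sides of~\eqref{equality} are then continuous in $t_0,t_1$, and the identity extends to every admissible pair. The main obstacle in the whole argument is the inertial term: its convergence demands $u\otimes u\in L^2_tL^2_x$, which is precisely the payoff of the critical exponent $r=3$; for subcritical $r$ the same scheme would not close without additional regularity assumptions or a commutator estimate in the spirit of Duchon--Robert.
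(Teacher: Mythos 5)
Your overall strategy---mollify in time, use the solution itself as a test function, and let the $L^4_tL^4_x$ regularity (which for $r=3$ is built into Definition~\ref{defi:slabecbfr}) close the estimate on the inertial term---is the same Shinbrot-type argument that the paper follows, and your identification of $u\otimes u\in L^2(0,T;L^2)$ as the decisive payoff of the critical exponent is correct. However, there is a genuine gap at the central step. From the weak formulation you only obtain the mollified equation paired against \emph{smooth} divergence-free $\psi$; the function you then want to pair it with, $u_\epsilon(\cdot,t)$, lies merely in $V\cap X_4(\mathbb{T}^3)$. To pass from $\psi\in\mathcal{D}_\sigma$ to $\psi=u_\epsilon(\cdot,t)$ you must approximate $u_\epsilon(\cdot,t)$ by smooth divergence-free fields \emph{simultaneously} in the $V$-norm (for the viscous term) and in the $L^4$-norm (for the inertial and absorption terms). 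This simultaneous density of $\mathcal{D}_\sigma$ in $L^{4}(0,T;X_4(\mathbb{T}^3))\cap L^{2}(0,T;V)$ is precisely the technical heart of the paper (Lemma~\ref{density}): it is obtained from cubically truncated Fourier partial sums $S_n$, whose uniform $L^p$-boundedness (Theorem~\ref{fourierinlp}) is a nontrivial fact, and the authors stress that one cannot use the usual spectral truncations here because the available regularity is in a Lebesgue rather than a Sobolev space. Your proposal treats this step as automatic; as written, the pairing $\dual{(\abs{u}^2u)_\epsilon}{u_\epsilon}$ and the integration by parts in the inertial term are not justified. On the torus the gap is fillable by exactly the paper's lemma, but it is the genuinely delicate point (and the reason the bounded-domain case is deferred to a future paper).

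Two smaller discrepancies are worth noting. First, the paper mollifies the \emph{test function} with an even kernel and exploits $\int_0^h\eta_h(s)\,\mathrm{d}s=\tfrac12$ together with weak $L^2$-continuity to evaluate the boundary terms at \emph{every} $t_0,t_1$, so the identity \eqref{equality} is obtained for all times in one pass; your symmetric mollification of the equation yields it only at Lebesgue points and forces a bootstrap. Second, the bootstrap you propose (deduce $u\in C([0,T];H)$ from $\partial_t u\in L^2(0,T;V^*)+L^{4/3}(0,T;L^{4/3})$ via a Lions--Magenes-type lemma, then extend \eqref{equality} by continuity) can be made to work, but identifying $\partial_t u$ in that dual space again requires the same density statement, and the logical order in the paper is the reverse: continuity into $H$ is \emph{deduced from} the energy equality (weak continuity plus convergence of norms in a Hilbert space), not used to prove it. Neither of these is fatal, but the density step must be supplied for the proof to stand.
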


To the best of our knowledge, the validity of the energy equality is not to date verified for the convective Brinkman--Forchheimer equations $(\ref{alphazero})$ for the range of exponent values $r \in [1, 3]$. For larger values of the exponent $r > 3$, it was already shown that the CBF equations enjoy existence of global-in-time strong solutions (see proof for bounded domains in \cite{KalantarovZelik}) and hence the energy equality is satisfied. Theorem $\ref{twr:ee}$ extends the energy equality to the critical case $r = 3$. A similar result was given in the~paper of Cheskidov et al.\ \cite{Friedlander} where the energy equality was proved to hold for weak solutions of the NSE in the functional space
$$ L^3({0, T; {D}({A^{5/12}})}). $$
Here ${D}({A^{5/12}})$ is the domain of the fractional power of the Stokes operator \linebreak $A:= -\mathbb{P}\Delta$, where $\mathbb{P} : L^2 \to H$ is the Leray projection (for references see \cite{ConstantinFoias}, \cite{RRSbook} or \cite{Temam}). This space corresponds to the fractional Sobolev space $H^{5/6}$. 

The result of Theorem $\ref{twr:ee}$ was stated without a proof in \cite{Oliveira} for the Navier--Stokes equations modified by an absorption term considered on a bounded domain $\Omega$ with a compact boundary $\partial\Omega$; it perhaps seems at first glance that given the results for the NSE mentioned above no new proof is required in this case. However, in fact one has to argue more carefully in order to handle
the additional nonlinear term: one cannot use the usual truncations of the Fourier series as an approximating sequence, since we have regularity of solutions in a Lebesgue space rather than in a Sobolev space. Therefore, even in the periodic case discussed here, we use more carefully truncated Fourier series to obtain our result. We adapt the proof given in Galdi \cite{Galdi}, where a~specific mollification in time is used.

\section{Preliminaries}

\makeatletter
    \renewcommand{\theequation}{{\thesection}.\@arabic\c@equation}
    \renewcommand{\thetheorem}{{\thesection}.\@arabic\c@theorem}
    \renewcommand{\thedefinition}{{\thesection}.\@arabic\c@definition}
    \renewcommand{\thecorollary}{{\thesection}.\@arabic\c@corollary}
    \renewcommand{\theproposition}{{\thesection}.\@arabic\c@proposition}
\makeatother

In the sequel we will make use of the following auxiliary lemma, whose proof consists of integration by parts and differentiation of the absolute value function (see \cite{RobinsonSadowski}).
\begin{lemma} \label{witeklemma}
For every ${r \geq 1}$, if $u \in W^{2, 2}(\Omega)$, where $\Omega$ is either the whole space $\mathbb{R}^3$ or the three-dimensional torus $\mathbb{T}^3$ with $\calka{u} = 0$, or an open, bounded domain $\Omega \subset \mathbb{R}^3$ with Dirichlet boundary conditions $u|_{\partial\Omega} = 0$, then
\begin{equation} \nonumber
\int_{\Omega} -\Delta u \cdot \abs{u}^{r - 1}u \, \mathrm{d}x \geq \int_{\Omega} \abs{\nabla u}^{2}\abs{u}^{r -1} \, \mathrm{d}x.
\end{equation}
\end{lemma}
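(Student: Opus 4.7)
The plan is to reduce the desired inequality to a pointwise algebraic identity obtained by integration by parts, inserting a small regularization that keeps the factor $|u|^{r-3}$ well defined in the delicate range $1\le r<3$. I would set $|u|_{\varepsilon}(x):=(|u(x)|^{2}+\varepsilon)^{1/2}$ for $\varepsilon>0$. This function is as regular as $u$, is bounded below by $\sqrt{\varepsilon}$, decreases monotonically to $|u|$ as $\varepsilon\to 0^{+}$, and satisfies
$$\partial_{j}|u|_{\varepsilon}^{r-1}=(r-1)\,|u|_{\varepsilon}^{r-3}\sum_{k}u_{k}\partial_{j}u_{k},$$
which is pointwise bounded without any singularity.

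Next I would use $|u|_{\varepsilon}^{r-1}u$ as a multiplier and integrate by parts. In each of the three geometries the boundary term vanishes (periodicity on $\mathbb{T}^{3}$, the Dirichlet condition on bounded $\Omega$, and a standard compactly-supported approximation argument on $\mathbb{R}^{3}$), and combining this with the product rule yields
\begin{align*}
\int_{\Omega}-\Delta u\cdot|u|_{\varepsilon}^{r-1}u\,\mathrm{d}x
&=\int_{\Omega}|u|_{\varepsilon}^{r-1}|\nabla u|^{2}\,\mathrm{d}x\\
&\quad+(r-1)\int_{\Omega}|u|_{\varepsilon}^{r-3}\sum_{j=1}^{3}\Bigl(\sum_{i=1}^{3}u_{i}\partial_{j}u_{i}\Bigr)^{\!2}\,\mathrm{d}x.
\end{align*}
Since $r\ge 1$ and $|u|_{\varepsilon}^{r-3}>0$, the second integral is non-negative and may be discarded, giving the desired inequality with $|u|_{\varepsilon}^{r-1}$ in place of $|u|^{r-1}$.

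Finally I would send $\varepsilon\to 0^{+}$. The three-dimensional Sobolev embedding $W^{2,2}(\Omega)\hookrightarrow L^{\infty}(\Omega)$ shows that $u$ is essentially bounded, so $|u|_{\varepsilon}^{r-1}$ is dominated uniformly in $\varepsilon$ by $(|u|^{2}+1)^{(r-1)/2}\in L^{\infty}$, and dominated convergence transfers both remaining integrals to the $\varepsilon=0$ limit. The only genuine obstacle is exactly this singularity of $|u|^{r-3}$ in the range $1\le r<3$; the regularization sidesteps it because the singular factor appears only in the favourably-signed term that is discarded \emph{before} passing to the limit, leaving only well-behaved expressions in which $\varepsilon\to 0$ is routine.
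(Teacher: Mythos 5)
Your proof is correct and takes essentially the same route as the paper's, which likewise integrates by parts against $\abs{u}^{r-1}u$ and identifies the extra term $\tfrac{r-1}{4}\int_{\Omega}\abs{u}^{r-3}\abs{\nabla\abs{u}^{2}}^{2}\,\mathrm{d}x$ as non-negative; your discarded term is exactly this one, since $\abs{\nabla\abs{u}^{2}}^{2}=4\sum_{j}\bigl(\sum_{i}u_{i}\partial_{j}u_{i}\bigr)^{2}$. The $\varepsilon$-regularization of $\abs{u}$ is a sensible technical refinement that makes the same computation rigorous near the zero set of $u$ for $1\le r<3$.
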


Explicitly, the left-hand side of the above equals (integrating by parts)
\begin{align} \nonumber
\int_{\Omega} -\Delta u \cdot \abs{u}^{r - 1}u \, \mathrm{d}x &= \int_{\Omega} \abs{\nabla u}^{2}\abs{u}^{r - 1} \, \mathrm{d}x + 4\lra{\frac{r - 1}{(r + 1)^2}}\calka{\abs{\nabla{\abs{u}^{(r + 1)/2}}}^2} \\ \nonumber
&= \int_{\Omega} \abs{\nabla u}^{2}\abs{u}^{r - 1} \, \mathrm{d}x + \frac{(r - 1)}{4}\calka{\abs{u}^{r - 3}\abs{\nabla{\abs{u}^{2}}}^2}.
\end{align}
In particular, by Lemma \ref{witeklemma}, we can write for the absorption term $\abs{u}^{r - 1}u$
\begin{equation} \nonumber
0 \leq \calka{\abs{\nabla u}^{2}\abs{u}^{r -1}} \leq \calka{-\Delta u \cdot \abs{u}^{r- 1}u} \leq r\calka{\abs{\nabla u}^{2}\abs{u}^{r -1}}.
\end{equation}

We will also need another lemma from the same paper \cite{RobinsonSadowski}.
\begin{lemma} \label{witeklemmadwa}
Take $2 \leq p < 3$. Then there exists a constant $c_p > 0$ such that, for every $u$ in the Sobolev space $W^{1, p}({\mathbb{R}^3})$ we have $u \in L^{3(r + 1)}({\mathbb{R}^3})$ and
\begin{equation} \nonumber
\norm{u}_{L^{3(r + 1)}(\mathbb{R}^3)}^{r + 1} \leq c_p \int_{\mathbb{R}^3}{\abs{\nabla u}^2\abs{u}^{r - 1} \, \mathrm{d}x},
\end{equation}
where $r + 1= p/(3 - p)$. The same is true if $\Omega$ is a bounded (perhaps periodic) domain and $u \in W^{1, p}(\Omega)$ with $\calka{u} = 0$ or $u|_{\partial \Omega} = 0$.
\end{lemma}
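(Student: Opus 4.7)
The plan is to reduce the claim to the classical Sobolev embedding $\dot{W}^{1,2}(\mathbb{R}^3) \hookrightarrow L^6(\mathbb{R}^3)$ by means of the substitution $v := \abs{u}^{(r+1)/2}$. The pointwise identity $v^6 = \abs{u}^{3(r+1)}$ yields
\[ \norm{v}_{L^6(\mathbb{R}^3)}^2 = \norm{u}_{L^{3(r+1)}(\mathbb{R}^3)}^{r+1}, \]
while the assumption $r + 1 = p/(3-p)$ means that $3(r+1) = 3p/(3-p)$ is exactly the Sobolev critical exponent associated with $W^{1,p}(\mathbb{R}^3)$, which is a reassuring consistency check. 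After this substitution, the target inequality reduces to a bound of the form $\norm{v}_{L^6}^2 \leq C \norm{\nabla v}_{L^2}^2$, which is exactly what the Sobolev inequality delivers.

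Next I would justify the chain rule for $v$. For smooth $u$ away from its zero set, $\nabla\abs{u} = (u \cdot \nabla u)/\abs{u}$ together with Kato's pointwise inequality $\abs{\nabla\abs{u}} \leq \abs{\nabla u}$ gives
\[ \abs{\nabla v}^2 = \frac{(r+1)^2}{4} \abs{u}^{r-1} \abs{\nabla \abs{u}}^2 \leq \frac{(r+1)^2}{4} \abs{u}^{r-1} \abs{\nabla u}^2. \]
For a general vector field $u \in W^{1,p}(\mathbb{R}^3)$, I would use the standard regularization $v_\epsilon := (\abs{u}^2 + \epsilon)^{(r+1)/4}$, for which the chain rule is classical because the argument stays strictly positive, and pass to the limit $\epsilon \to 0^+$ using monotone convergence for the gradient bound and Fatou for $\norm{v}_{L^6}$. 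If the right-hand side of the lemma is infinite there is nothing to prove, so one may assume it is finite throughout.

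Combining the pointwise bound with the Sobolev inequality on $\mathbb{R}^3$ yields
\[ \norm{u}_{L^{3(r+1)}(\mathbb{R}^3)}^{r+1} = \norm{v}_{L^6(\mathbb{R}^3)}^2 \leq S_3^2 \norm{\nabla v}_{L^2(\mathbb{R}^3)}^2 \leq \frac{(r+1)^2 S_3^2}{4} \int_{\mathbb{R}^3} \abs{\nabla u}^2 \abs{u}^{r-1} \, \mathrm{d}x, \]
so one may take $c_p := (r+1)^2 S_3^2 / 4$. For bounded (or periodic) $\Omega$ the strategy is identical once one has the domain-adapted bound $\norm{v}_{L^6(\Omega)} \leq c \norm{\nabla v}_{L^2(\Omega)}$: in the Dirichlet case $v$ inherits a vanishing trace from $u$ so Sobolev--Poincar\'e applies directly, while in the periodic zero-mean case one subtracts the mean $\bar v$, applies Sobolev--Poincar\'e to $v - \bar v$, and controls $\abs{\bar v}$ via H\"older together with Poincar\'e for $u$ itself, exploiting its zero-mean property. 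The main obstacle I foresee is precisely this chain-rule step at the vanishing set of $u$ when $u$ is only of $W^{1,p}$-regularity and vector-valued; everything else is a clean substitution followed by a direct application of Sobolev's inequality.
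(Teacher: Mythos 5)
The paper never proves this lemma itself --- it is imported from the cited reference \cite{RobinsonSadowski} --- so there is no in-paper argument to compare against. Your core strategy (substitute $v := \abs{u}^{(r+1)/2}$, note $\norm{v}_{L^6}^2 = \norm{u}_{L^{3(r+1)}}^{r+1}$ and that $3(r+1) = 3p/(3-p)$ is the Sobolev conjugate of $p$, use Kato's inequality to get $\abs{\nabla v}^2 \leq \tfrac{(r+1)^2}{4}\abs{u}^{r-1}\abs{\nabla u}^2$, and apply the Sobolev embedding) is the standard one and is sound for $\mathbb{R}^3$ and for the Dirichlet case (where one can simply extend by zero and invoke the whole-space result). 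Two steps need repair, however. The lesser one is the regularization: $v_\varepsilon = (\abs{u}^2+\varepsilon)^{(r+1)/4}$ tends to the positive constant $\varepsilon^{(r+1)/4}$ at infinity, so it does not lie in $L^6(\mathbb{R}^3)$ and the homogeneous Sobolev inequality cannot be applied to it as written (subtract $\varepsilon^{(r+1)/4}$ first); moreover the majorant $(\abs{u}^2+\varepsilon)^{(r-1)/2}\abs{\nabla u}^2$ \emph{decreases} as $\varepsilon \downarrow 0$, so monotone convergence is unavailable, and the natural error term $\varepsilon^{(r-1)/2}\norm{\nabla u}_{L^2(\mathbb{R}^3)}^2$ need not be finite when $p>2$. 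A cleaner route is to prove the inequality for $u \in C_c^\infty$ (where $\abs{u}^{(r+1)/2}$ is Lipschitz with compact support, since $(r+1)/2 \geq 1$) and pass to the limit along a $W^{1,p}$-approximation; the right-hand side is continuous along such sequences because $\int\abs{\nabla u}^2\abs{u}^{r-1} \leq \norm{\nabla u}_{L^p}^2\norm{u}_{L^{p^*}}^{r-1}$ by H\"older, which works precisely because $2/p + (r-1)/p^* = 1$.

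The genuine gap is the periodic mean-zero case. Your plan requires $\abs{\bar v} \leq C\lr{\calka{\abs{u}^{r-1}\abs{\nabla u}^2}}^{1/2}$, equivalently $\norm{u}_{L^{(r+1)/2}(\Omega)}^{(r+1)/2} \leq C\lr{\calka{\abs{u}^{r-1}\abs{\nabla u}^2}}^{1/2}$, and ``H\"older together with Poincar\'e for $u$'' does not deliver this: it produces bounds like $\norm{u}_{L^{r-1}}^{(r-1)/2}\norm{\nabla u}_{L^2}$, and there is no reverse H\"older inequality letting you dominate such a product by the weighted integral $\calka{\abs{u}^{r-1}\abs{\nabla u}^2}$. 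Worse, the reduction to a scalar Poincar\'e--Sobolev inequality for $v$ cannot work in principle: the zero-mean (even divergence-free) field $u = (0, \cos x_1, \sin x_1)$ has constant modulus, so $\nabla v \equiv 0$ while $v \not\equiv 0$, and any correct argument must therefore exploit the full gradient $\nabla u$ inside the weighted integral --- for instance by working with the vector-valued substitution $\abs{u}^{(r-1)/2}u$ (which has the same modulus as $v$ and satisfies $\abs{\nabla(\abs{u}^{(r-1)/2}u)} \leq c_r\abs{u}^{(r-1)/2}\abs{\nabla u}$) and then controlling its mean using $\tcalka{u} = 0$ quantitatively. As it stands this step is asserted rather than proved, and it is exactly the part of the lemma that the equality case above shows to be delicate.
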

From the proof of Lemma \ref{witeklemmadwa}, it is clear that whenever we can approximate the~function $u$ in the space $C_0^{\infty}(\Omega)$ and whenever
$$ I_r(u) := \calka{\abs{\nabla u}^2\abs{u}^{r - 1}} < \infty \quad \mbox{for} \quad r \geq 1, $$
then $u$ belongs to the space $L^{3(r + 1)}(\Omega)$.

Boundedness of the quantity $I_r(u)$ implies as well that the function $u \in W^{1, 1}(\Omega)$ belongs also to the certain type of Besov space, namely to the Nikol'ski\u \i \  space\footnote{Nikol'ski\u \i \ spaces are a particular case of the Besov spaces when one of the exponents is fixed: $\mathcal{N}^{s, p} = B^{s, p}_{\infty}$. See \cite{Simon} for more information about these spaces.} $\mathcal{N}^{2/(r + 1), r + 1}$. In particular, we have
\begin{equation} \label{maleklemma}
\norm{u}^{r + 1}_{\mathcal{N}^{2/(r + 1), r + 1}(\Omega)} \leq c \, I_r(u),
\end{equation}
where $c > 0$ is a constant depending only on $r$ and $\Omega$  (see \cite{Malek} for the details).

We introduce the Nikol'ski\u \i \ spaces $\mathcal{N}^{s, p}$ for $p \in [1, \infty)$ and $s = m + \sigma$, where $m \geq 0$ is an integer and $\sigma \in (0, 1)$, as the subspaces of the $L^p$ functions for which the following norm
\begin{align} \nonumber
\norm{u}^p_{\mathcal{N}^{s, p}(\Omega)} &:= \norm{u}^p_p + \abs{u}^p_p \\ \nonumber
&:= \norm{u}^p_p + \sum_{\abs{\alpha} = m}{\sup_{0 < \abs{h} < \delta}\calka{ \frac{ \abs{\partial^{\alpha}u(x + h) - \partial^{\alpha}u(x)}^p} {\abs{h}^{\sigma p}} } }
\end{align}
is finite. Here $\delta > 0$ is fixed. We have for any $\varepsilon \in (0, 1)$ the following embeddings (see \cite{Nikolskii})
$$ \mathcal{N}^{s, p} \hookrightarrow W^{s - \varepsilon, p} \hookrightarrow \mathcal{N}^{s - \varepsilon, p}. $$

Applying the tools described above we will show in Theorem $\ref{globalstrong}$ that strong solutions of the convective Brinkman--Forchheimer equations with $r > 3$ possess additional regularity compared to the corresponding solutions of the Navier--Stokes equations. Despite this fact, we will use in this paper the usual definition of a~strong solution.
\begin{definition}
We say that a vector field $u$ is a \emph{strong solution} of the CBF equations $(\ref{alphazero})$ if, for the initial condition $u_0 \in V$, it is a weak solution and additionally it possesses higher regularity, namely
$$ u \in L^{\infty}({0, T; V}) \cap L^2({0, T; V^2}). $$
\end{definition}
A function $u$ is called \textit{a global-in-time strong solution} if it is a strong solution for all $T > 0$.

\section{Global existence for $r > 3$}

Now we will provide a simple proof of the global-in-time existence of strong solutions for the convective Brinkman--Forchheimer equations in the case $r > 3$. This result was given in \cite{KalantarovZelik} for a broader class of nonlinearities in bounded domains $\Omega \subset \mathbb{R}^3$ and for more regular initial conditions $u_0 \in V^2(\Omega)$, where the proof was based on a~nonlinear localisation technique.
\begin{theorem} \label{globalstrong}
For every initial condition $u_0 \in V$ and for every exponent $r > 3$, there exists a global-in-time strong solution of the CBF equations $(\ref{alphazero})$. Moreover, this solution belongs to the spaces
\begin{equation} \label{spaces}
L^{r + 1}({0, T; X_{3(r + 1)}(\mathbb{T}^3)}) \quad \mbox{and} \quad L^{r + 1}({0, T; \mathcal{N}^{2/(r + 1), r + 1}(\mathbb{T}^3)})
\end{equation}
for all $T > 0$.
\end{theorem}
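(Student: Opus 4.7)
The plan is to construct the strong solution by a standard Galerkin approximation, obtain a priori bounds in $V$ by testing with $-\Delta u_n$, use Lemma~\ref{witeklemma} to extract the good quantity $I_r(u_n)$ from the absorption term, and finally derive the extra regularity from Lemma~\ref{witeklemmadwa} and \eqref{maleklemma}.

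First I would set up the Galerkin scheme using the Fourier--Stokes eigenbasis on $\mathbb{T}^3$: let $P_n$ be the orthogonal projection in $H$ onto the span of the first $n$ divergence-free Fourier modes, and let $u_n$ solve the projected ODE system. Because $u_n$ is spanned by divergence-free modes we have $-\Delta u_n = Au_n \in P_n H$, so testing the Galerkin equation with $-\Delta u_n$ is admissible, the pressure drops out, and each projection $P_n$ can be removed. The usual $L^2$ energy identity yields the uniform bounds
$$\|u_n\|_{L^{\infty}(0,T;H)}+\mu^{1/2}\|\nabla u_n\|_{L^{2}(0,T;H)}+\beta^{1/(r+1)}\|u_n\|_{L^{r+1}(0,T;L^{r+1})}\leq C\|u_0\|.$$

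The heart of the argument is the estimate obtained from testing with $-\Delta u_n$. Lemma~\ref{witeklemma} gives $\beta\skal{|u_n|^{r-1}u_n}{-\Delta u_n}\geq \beta\, I_r(u_n)$, so
$$\tfrac12\tfrac{d}{dt}\|\nabla u_n\|^{2}+\mu\|\Delta u_n\|^{2}+\beta\, I_r(u_n)\leq \tcalka{|u_n|\,|\nabla u_n|\,|\Delta u_n|}.$$
I would estimate the right-hand side by Young, absorbing $\tfrac{\mu}{2}\|\Delta u_n\|^{2}$ on the left and leaving $\tfrac{1}{2\mu}\int |u_n|^{2}|\nabla u_n|^{2}$. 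Here the assumption $r>3$ is used in the decisive way: writing $|u_n|^{2}|\nabla u_n|^{2}=(|u_n|^{r-1}|\nabla u_n|^{2})^{a}\cdot|\nabla u_n|^{2(1-a)}$ with $a=2/(r-1)\in(0,1)$ and applying H\"older with exponents $(r-1)/2$ and $(r-1)/(r-3)$,
$$\tcalka{|u_n|^{2}|\nabla u_n|^{2}}\leq I_r(u_n)^{2/(r-1)}\|\nabla u_n\|^{2(r-3)/(r-1)}.$$
A second Young with the same exponents, tuned so the first factor can be absorbed into $\tfrac{\beta}{2}I_r(u_n)$, produces the differential inequality $\tfrac{d}{dt}\|\nabla u_n\|^{2}+\mu\|\Delta u_n\|^{2}+\beta I_r(u_n)\leq C(\mu,\beta,r)\|\nabla u_n\|^{2}$. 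Gr\"onwall then yields uniform bounds on $u_n$ in $L^{\infty}(0,T;V)\cap L^{2}(0,T;V^{2})$ and on $I_r(u_n)$ in $L^{1}(0,T)$, for every $T>0$.

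Passage to the limit is routine: the bound on $\partial_t u_n$ in a suitable negative-order space (obtained from the equation), together with the $L^{2}(0,T;V^{2})$ bound, gives strong convergence in $L^{2}(0,T;V)$ by Aubin--Lions; this is more than enough to pass to the limit in the convective and absorption terms (the latter using that $\|u_n\|_{L^{r+1}}$ is uniformly bounded). The limit $u$ is a weak solution lying in $L^{\infty}(0,T;V)\cap L^{2}(0,T;V^{2})$, i.e.\ a strong solution. For the additional regularity \eqref{spaces}, note that lower semicontinuity of $I_r$ under the above convergences gives $\int_0^{T} I_r(u)\,\mathrm{d}t<\infty$; applying Lemma~\ref{witeklemmadwa} pointwise in $t$ (with $p=3(r+1)/(r+2)\in(2,3)$ for $r>3$) yields $u\in L^{r+1}(0,T;X_{3(r+1)})$, while \eqref{maleklemma} gives $u\in L^{r+1}(0,T;\mathcal{N}^{2/(r+1),r+1})$.

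The main obstacle is the nonlinear estimate for $\skal{(u\cdot\nabla)u}{-\Delta u}$: the standard Navier--Stokes obstruction forces us to pay part of the cost in the new quantity $I_r(u_n)$, and the H\"older split above only closes when the exponent $a=2/(r-1)$ is strictly less than $1$. This is precisely why the argument works for $r>3$ and degenerates at the critical value $r=3$, where one instead needs the coefficient condition $4\mu\beta\geq 1$ treated elsewhere in the paper.
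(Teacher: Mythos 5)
Your proposal is correct and follows essentially the same route as the paper: testing with $-\Delta u$, using Lemma~\ref{witeklemma} on the absorption term, the H\"older--Young split of $\int |u|^2|\nabla u|^2$ with exponents $(r-1)/2$ and $(r-1)/(r-3)$ (which is exactly where $r>3$ enters), Gr\"onwall, and then Lemma~\ref{witeklemmadwa} together with \eqref{maleklemma} for the extra regularity \eqref{spaces}. The paper presents only the formal a priori estimates and defers the Galerkin justification, so your added detail on the approximation scheme and the limit passage is a welcome but not substantively different elaboration.
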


We present here only formal calculations which can be justified rigorously via a~Galerkin approximation argument.
\begin{proof}
Multiplying both sides of $(\ref{alphazero})$ by $-\Delta u$ and integrating over $\mathbb{T}^3$, we obtain
\begin{align} \nonumber
\frac{1}{2}\frac{\mathrm{d}}{\mathrm{d}t}\norm{\nabla u}^2 + \mu\norm{\Delta u}^2 + \beta\dual{\abs{u}^{r - 1}u}{-\Delta u} + \dual{(u \cdot \nabla)u}{-\Delta u} = 0.
\end{align}

Using Lemma \ref{witeklemma} we note that
\begin{align} \nonumber
\dual{\abs{u}^{r - 1}u}{-\Delta u} \geq \tcalka{ \abs{u}^{r - 1}\abs{\nabla u}^2 }.
\end{align}
This gives us
\begin{align} \nonumber
\frac{1}{2}\frac{\mathrm{d}}{\mathrm{d}t}\norm{\nabla u}^2 + \mu\norm{\Delta u}^2 &+ \beta\tcalka{ \abs{u}^{r - 1}\abs{\nabla u}^2 } \leq \tcalka{ \abs{u}\abs{\nabla u}\abs{-\Delta u} } \\ \nonumber
&\leq \frac{1}{2}\lr{\frac{1}{\mu}\tcalka{ \abs{u}^2\abs{\nabla u}^2 } + \mu\tcalka{ \abs{\Delta u}^2 }}
\end{align}
and hence
\begin{align} \label{globalr}
\frac{\mathrm{d}}{\mathrm{d}t}\norm{\nabla u}^2 &+ \mu\norm{\Delta u}^2 + 2\beta\tcalka{ \abs{u}^{r - 1}\abs{\nabla u}^2 } \leq \frac{1}{\mu}\tcalka{ \abs{u}^2\abs{\nabla u}^2 }.
\end{align}

Now we observe the following estimate for $r > 3$:
\begin{align} \nonumber
\tcalka{ \abs{u}^2\abs{\nabla u}^2 } &= \tcalka{ \lr{\abs{u}^2\abs{\nabla u}^{4/(r - 1)}}\lr{\abs{\nabla u}^{2(r - 3)/(r - 1)}} } \\ \nonumber
&\leq \lr{\tcalka{ \abs{u}^{r - 1}\abs{\nabla u}^{2}} }^{2/(r - 1)}\lr{\tcalka{ \abs{\nabla u}^{2}} }^{(r - 3)/(r - 1)} \\ \label{rgeqtrzy}
&\leq \beta\mu\lr{\tcalka{ \abs{u}^{r - 1}\abs{\nabla u}^2 }} + c(\beta, \mu, r)\lr{\tcalka{ \abs{\nabla u}^2 }} .
\end{align}
In the above we used H\"{o}lder's and Young's inequalities with the same exponents $(r - 1)/2$ and $(r - 1)/(r - 3)$. The value of the constant $c(\beta, \mu, r)$ can be computed explicitly
$$ c(\beta, \mu, r) = \lr{\frac{2}{\beta\mu(r - 1)}}^{2/(r - 3)}\lr{\frac{r - 3}{r - 1}}. $$

Plugging the estimate $\lr{\ref{rgeqtrzy}}$ into $\lr{\ref{globalr}}$ gives
\begin{align} \label{extraregularity}
\frac{\mathrm{d}}{\mathrm{d}t}\norm{\nabla u}^2 + \mu\norm{\Delta u}^2 + \beta\tcalka{ \abs{u}^{r - 1}\abs{\nabla u}^2 } \leq \frac{c({\beta, \mu, r})}{\mu}\norm{\nabla u}^2.
\end{align}

In particular, we have
\begin{align} \nonumber
\frac{\mathrm{d}}{\mathrm{d}t}\norm{\nabla u}^2 \leq \frac{c({\beta, \mu, r})}{\mu}\norm{\nabla u}^2.
\end{align}
Application of Gronwall's Lemma yields that $\norm{\nabla u}^2$ stays bounded on arbitrarily large time interval $\lra{0, T}$. Additionally, since $\tcalka{u} = 0$, we have the following relations
$$ \norm{u}_{V} \leq c\norm{\nabla u} \quad \mbox{and} \quad \norm{u}_{V^2} \leq c\norm{\Delta u}, $$
so in particular, $u \in L^{\infty}(0, T; V)$. Then one infers from $(\ref{extraregularity})$ that $\int_{0}^{T}{\norm{\Delta u}^2} < \infty$. 
Therefore, $u$ is indeed a strong solution on the time interval $[0, T]$ for all $T > 0$.

Additional regularity $(\ref{spaces})$ for the function $u$ follows now from the inequality $(\ref{extraregularity})$, Lemma $\ref{witeklemmadwa}$, and the estimate $(\ref{maleklemma})$.
\end{proof}

\section{Global existence for $r = 3$ and for large coefficients $4\mu\beta \geq 1$}

From now on we consider only the critical case of the convective Brinkman--Forchheimer equations $(r = 3)$. We prove here global-in-time existence of strong solutions of $(\ref{alphazerocrit})$ for all initial conditions $u_0 \in V$, when the product $\mu\beta$ is sufficiently large. From the physical point of view this is not a surprising result. It means that when both the viscosity of a fluid and the porosity of a porous medium are large enough, then the corresponding flow stays bounded and regular. What is more interesting is the fact that when the viscosity is small one can still obtain a regular solution by letting the porosity to be large, and vice versa.
\begin{theorem} \label{globalcoef}
For every initial condition $u_0 \in V$, there exists a global-in-time strong solution of the critical CBF equations $(\ref{alphazerocrit})$, provided that $4\mu\beta \geq 1$, i.e.\ both the viscosity and porosity are not too small.
\end{theorem}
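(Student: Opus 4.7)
The plan is to mimic the $H^{1}$ energy estimate used in Theorem \ref{globalstrong}, adapting it to the critical exponent $r=3$. At the formal level I would pair equation $(\ref{alphazerocrit})$ with $-\Delta u$ and integrate over $\mathbb{T}^3$, obtaining
\begin{equation*}
\frac{1}{2}\frac{\mathrm{d}}{\mathrm{d}t}\norm{\nabla u}^{2} + \mu \norm{\Delta u}^{2} + \beta \dual{\abs{u}^{2} u}{-\Delta u} + \dual{(u\cdot \nabla) u}{-\Delta u} = 0.
\end{equation*}
Lemma \ref{witeklemma} with $r=3$ then produces the key good term $\beta \int_{\mathbb{T}^3} \abs{u}^{2} \abs{\nabla u}^{2}\,\mathrm{d}x$ on the left-hand side.

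The decisive step is to balance the convective term \emph{exactly} against the two good terms. Using pointwise Cauchy--Schwarz together with Young's inequality, tuned to match the Forchheimer coefficient, gives
\begin{equation*}
\left| \dual{(u\cdot \nabla) u}{-\Delta u} \right| \leq \int_{\mathbb{T}^3} \abs{u}\abs{\nabla u}\abs{\Delta u}\,\mathrm{d}x \leq \beta \int_{\mathbb{T}^3} \abs{u}^{2}\abs{\nabla u}^{2}\,\mathrm{d}x + \frac{1}{4\beta}\norm{\Delta u}^{2}.
\end{equation*}
Absorbing the first term on the right into the corresponding good term coming from Lemma \ref{witeklemma} leaves
\begin{equation*}
\frac{1}{2}\frac{\mathrm{d}}{\mathrm{d}t}\norm{\nabla u}^{2} + \left(\mu - \frac{1}{4\beta}\right)\norm{\Delta u}^{2} \leq 0,
\end{equation*}
and the hypothesis $4\mu\beta \geq 1$ is precisely what renders the prefactor $\mu - 1/(4\beta)$ non-negative. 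Integrating in time yields $\norm{\nabla u(t)}^{2} \leq \norm{\nabla u_{0}}^{2}$ on every interval $[0, T]$, hence $u \in L^{\infty}(0, T; V)$.

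When $4\mu\beta > 1$ strictly, the same integration also delivers $u \in L^{2}(0, T; V^{2})$ with constant $2(\mu - 1/(4\beta))$. In the borderline case $4\mu\beta = 1$ the direct integration is degenerate; I would recover the $V^{2}$ regularity either by a perturbation argument (applying the strict case with $\beta$ replaced by $\beta(1+\varepsilon)$ and letting $\varepsilon \to 0^{+}$), or by using the equation to write $\mu \Delta u = \partial_{t} u + (u\cdot\nabla)u + \nabla p + \beta \abs{u}^{2}u$ and combining the already-established $L^{\infty}(V)$ bound with elliptic regularity on the periodic box. As in Theorem \ref{globalstrong}, the whole calculation has to be performed at the Galerkin level, where $u_{n}$ is smooth and the pairing with $-\Delta u_{n}$ is legitimate; the estimates above are uniform in $n$ and pass to the limit in the standard manner. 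I expect the only real (and still mild) obstacle to be the equality case $4\mu\beta = 1$, where the sharpness of the Young's inequality leaves no dissipative slack.
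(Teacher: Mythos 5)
Your argument is exactly the paper's: test with $-\Delta u$, use Lemma \ref{witeklemma} to produce the good term $\beta\int_{\mathbb{T}^3}|u|^2|\nabla u|^2\,\mathrm{d}x$, and absorb the convective term via Cauchy--Schwarz and Young --- your choice of constants is the paper's estimate with the Young parameter $\theta = 2\beta$, yielding the identical condition $4\mu\beta \geq 1$. Your worry about the $L^2(0,T;V^2)$ bound in the borderline case $4\mu\beta = 1$ is a fair observation, but it applies equally to the paper, which likewise only derives that $\norm{\nabla u(t)}^2$ is non-increasing and concludes global existence by the absence of blow-up.
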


Again, we present here only a priori estimates which can be made rigorous by the means of a standard Galerkin approximation argument, see \cite{ConstantinFoias}, \cite{Galdi}, or \cite{Temam}, for example.

\begin{proof}
Multiplying the equation $(\ref{alphazerocrit})$ by $-\Delta u$ and integrating over $\mathbb{T}^3$, we obtain
\begin{align} \nonumber
\frac{1}{2}\frac{\mathrm{d}}{\mathrm{d}t}\norm{\nabla u}^2 + \mu\norm{\Delta u}^2 + \beta\dual{\abs{u}^{2}u}{-\Delta u} \leq \abs{\dual{(u \cdot \nabla)u}{-\Delta u}}.
\end{align}
Applying Lemma $\ref{witeklemma}$, we have
\begin{align} \label{coeffest}
\frac{1}{2}\frac{\mathrm{d}}{\mathrm{d}t}\norm{\nabla u}^2 + \mu\norm{\Delta u}^2 + \beta\tcalka{\abs{\nabla u}^2\abs{u}^{2}} \leq \tcalka{\abs{u}\abs{\nabla u}\abs{-\Delta u}}.
\end{align}
We want to estimate the right-hand side in such a way to absorb it with the terms on the left-hand side. Using the Cauchy--Schwarz and Young inequalities we obtain
\begin{align} \nonumber
\tcalka{\abs{u}\abs{\nabla u}\abs{\Delta u}} &\leq \lr{\tcalka{\abs{\nabla u}^2\abs{u}^{2}}}^{1/2}\lr{\tcalka{\abs{\Delta u}^2}}^{1/2} \\ \nonumber
&\leq \frac{\theta}{2}\tcalka{\abs{\nabla u}^2\abs{u}^{2}} + \frac{1}{2\theta}\tcalka{\abs{\Delta u}^2},
\end{align}
for some positive number $\theta > 0$. We use this estimate in the inequality $(\ref{coeffest})$ and~then move all the terms to the left-hand side to obtain
\begin{align} \nonumber
\frac{1}{2}\frac{\mathrm{d}}{\mathrm{d}t}\norm{\nabla u}^2 + \lr{\mu - \frac{1}{2\theta}}\norm{\Delta u}^2 + \lr{\beta - \frac{\theta}{2}}\tcalka{\abs{\nabla u}^2\abs{u}^{2}} \leq 0.
\end{align}
From the above we see that the norm $\norm{\nabla u(t)}^2$ is not increasing in time, provided that
$$ \mu - \frac{1}{2\theta} \geq 0 \quad \mbox{and} \quad \beta - \frac{\theta}{2} \geq 0  \quad \Longleftrightarrow \quad \mu\beta \geq \frac{1}{4}. $$
Hence, there is no blow-up and the strong solution originating from the initial condition $u_0 \in V$ exists for all times $t > 0$.
\end{proof}

We note that the above argument works only for the critical exponent $r = 3$. For other values of $r \in [1, 3)$ we are not able to balance the exponents in the correct way to absorb the convective term on the left-hand side of $(\ref{coeffest})$.

\section{Energy equality for critical case $r = 3$} \label{s:mainresult}

In this section we will prove Theorem $\ref{twr:ee}$, mostly following the proof of Theorem $4.1$ in \cite{Galdi}. The main idea is to use a weak solution as a test function. We cannot do this directly since $u$ is not sufficiently regular in space or time. Therefore, we regularise in time the finite-dimensional approximations of a weak solution and pass to the~limit with both the regularisation and spatial approximation parameters. To this end we recall here some standard facts of the theory of mollification.

Let $\eta(t)$ be an even, positive, smooth function with compact support contained in the interval $(-1, 1)$, such that
$$ \int_{-\infty}^{\infty}{\eta(s) \, \mathrm{d}s} = 1. $$
We denote by $\eta_{h}$ a family of mollifiers connected with the function $\eta$, i.e. 
$$ \eta_{h}(s) := h^{-1}\eta(s/h) \quad \mbox{for} \quad h > 0. $$
In particular, we have
\begin{equation} \label{half}
\int_{0}^{h}{\eta_{h}(s) \, \mathrm{d}s} = \frac{1}{2}.
\end{equation}
For any function $v \in L^q(0, T; X)$, where $X$ is a Banach space, $q \in [1, \infty)$, we denote its mollification in time by $v^{h}$
$$ v^{h}(s) := (v  \ast \eta_{h})(s) = \int_{0}^{T}{v(\tau)\eta_{h}(s - \tau) \, \mathrm{d}\tau} \quad{for} \quad h \in (0, T). $$

We have the following properties of this mollification (see Lemma $2.5$ in \cite{Galdi}).
\begin{lemma} \label{timemollifier}
Let $w \in L^q({0, T; X})$, $1 \leq q < \infty$, for some Banach space $X$. Then $w^{h} \in C^k({[0, T); X})$ for all $k \geq 0$. Moreover,
$$ \lim_{h \to 0}{\norm{w^{h} - w}_{L^q(0, T; X)}} = 0. $$
Finally, if $\lrb{w_n}_{n = 1}^{\infty}$ converges to $w$ in $L^q({0, T; X})$, then
$$ \lim_{n \to \infty}{\norm{w^{h}_n - w^{h}}_{L^q({0, T; X})}} = 0. $$
\end{lemma}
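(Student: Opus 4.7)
The plan is to establish the three assertions of the lemma separately, using in each case standard tools of mollification in Bochner spaces: differentiation under the integral sign, Young's convolution inequality, and density of continuous functions in $L^q(0, T; X)$.

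For the smoothness assertion, I would first note that $\eta_h \in C_c^\infty(\mathbb{R})$ with $\eta_h^{(k)}(s) = h^{-k-1}\eta^{(k)}(s/h)$ supported in $(-h, h)$. Extending $w$ by zero outside $[0, T]$, an induction on $k$ using dominated convergence shows that
$$ \frac{\mathrm{d}^k w^{h}}{\mathrm{d}s^k}(s) = \int_0^T w(\tau) \, \eta_h^{(k)}(s - \tau) \, \mathrm{d}\tau $$
exists as a Bochner integral in $X$ and is continuous in $s$: the difference quotients are majorised in the $X$-norm by $\norm{w(\tau)}_X \sup_\mathbb{R} |\eta_h^{(k+1)}|$, and this dominant function is integrable on $[0, T]$ by H\"older since $q \geq 1$ and the interval is bounded.

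For the $L^q$ convergence, Young's convolution inequality in the Bochner setting gives
$$ \norm{w^h}_{L^q(\mathbb{R}; X)} \leq \norm{\eta_h}_{L^1(\mathbb{R})} \norm{w}_{L^q(0, T; X)} = \norm{w}_{L^q(0, T; X)}, $$
so mollification is a contraction. Given $\varepsilon > 0$, density of $C([0, T]; X)$ in $L^q(0, T; X)$ provides $v \in C([0, T]; X)$ with $\norm{v - w}_{L^q(0,T;X)} < \varepsilon$; uniform continuity of $v$ on $[0, T]$ together with $\int \eta_h = 1$ and the support of $\eta_h$ gives $v^h \to v$ uniformly on compact subsets of $[0, T)$, hence in $L^q$. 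The triangle inequality
$$ \norm{w^h - w}_{L^q} \leq \norm{(w - v)^h}_{L^q} + \norm{v^h - v}_{L^q} + \norm{v - w}_{L^q} \leq 2\varepsilon + \norm{v^h - v}_{L^q} $$
then yields $\limsup_{h \to 0} \norm{w^h - w}_{L^q(0,T;X)} \leq 2\varepsilon$, and $\varepsilon$ is arbitrary. The third claim is immediate from linearity and the same contraction applied to $w_n - w$: $\norm{w_n^h - w^h}_{L^q} \leq \norm{w_n - w}_{L^q} \to 0$.

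The only non-routine point is the Bochner-valued Young inequality and the density of $C([0, T]; X)$ in $L^q(0, T; X)$ for a general Banach space $X$; both rely on strong measurability (essentially separable range), which is the standing assumption for Bochner integrability and is automatic for the separable spaces $H$, $V$, $L^{r+1}(\mathbb{T}^3)$ that appear later in the paper. I would indicate this reduction briefly and otherwise refer to \cite{Galdi}.
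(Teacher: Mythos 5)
Your proof is correct in substance, but there is nothing in the paper to compare it against: the paper does not prove this lemma, it simply imports it as Lemma $2.5$ of Galdi \cite{Galdi}. Your argument is the standard one (and is essentially the proof one finds in that reference): differentiation under the integral sign with a dominated-convergence justification for the $C^\infty$ regularity, the scalar-kernel Bochner--Young contraction $\norm{w^h}_{L^q} \leq \norm{\eta_h}_{L^1}\norm{w}_{L^q}$, a three-$\varepsilon$ density argument for the convergence $w^h \to w$, and linearity plus the contraction for the final claim. Your remark that strong measurability (separable essential range) is built into the definition of $L^q(0,T;X)$, so that density of $C([0,T];X)$ and the Bochner--Young inequality go through for a general Banach space $X$, is also the right thing to say.

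One point needs repairing. Because the convolution is taken only over $[0,T]$, the mollification $v^h$ of a continuous $v$ does \emph{not} converge to $v$ uniformly on compact subsets of $[0,T)$ that contain the left endpoint: by the evenness of $\eta$ and $(\ref{half})$ one has $v^h(0) \to \tfrac{1}{2}v(0)$, and similarly within distance $h$ of $T$. Uniform convergence holds only on compact subsets of the open interval $(0,T)$. This does not damage the $L^q$ conclusion, since $\sup_{s}\norm{v^h(s)}_X \leq \sup_{s}\norm{v(s)}_X$ while the boundary layers $[0,h) \cup (T-h,T]$ have measure $2h \to 0$, so their contribution to $\norm{v^h - v}_{L^q(0,T;X)}^q$ vanishes; but you should argue via this uniform bound on the boundary layer rather than claim uniform convergence up to $t = 0$. (Incidentally, this endpoint behaviour is not a nuisance to be hidden: it is exactly what the paper exploits in the proof of Theorem $\ref{twr:ee}$, where $\dual{u(t_1)}{(u\chi_{[0,t_1]})^{h}(t_1)} \to \tfrac{1}{2}\norm{u(t_1)}^2$.) With that adjustment the proof is complete.
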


Since our domain is the three-dimensional torus, we can approximate functions in $L^p$ spaces using carefully truncated Fourier expansions. We state this more precisely in the~following theorem (see e.g.\ Theorem $1.6$ in \cite{RRSbook} for more details).
\begin{theorem} \label{fourierinlp}
Let $\mathcal{Q}_n := [-n, n]^3 \cap \mathbb{Z}^3$. For every $w \in L^1(\mathbb{T}^3)$ and every $n \in \mathbb{N}$ define
\begin{equation} \label{fouriertruncation}
S_n(w) := \sum_{k \in \mathcal{Q}_n}\hat{w}_ke^{ik\cdot x}, 
\end{equation}
where the Fourier coefficients $\hat{w}_k$ are given by
$$ \hat{w}_k := \frac{1}{\abs{\mathbb{T}^3}}\int_{\mathbb{T}^3}{w(x)e^{-ik\cdot x} \, \mathrm{d}x}. $$
Then for every $1 < p < \infty$ there is a constant $c_p$, independent of $n$, such that
$$ \norm{S_n(w)}_{L^p(\mathbb{T}^3)} \leq c_p\norm{w}_{L^p(\mathbb{T}^3)} \quad \mbox{for all} \quad w \in L^p(\mathbb{T}^3) $$
and
$$ \norm{S_n(w) - w}_{L^p(\mathbb{T}^3)} \to 0 \quad \mbox{as} \quad n \to \infty. $$
\end{theorem}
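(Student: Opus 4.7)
The plan is to reduce the three-dimensional statement to a one-dimensional result and then invoke the classical theorem of M.\ Riesz on the $L^p$-boundedness of the Riesz projection (equivalently, the conjugate function operator) on $\mathbb{T}$. First I would observe that the cubic cutoff $\mathcal{Q}_n = [-n,n]^3 \cap \mathbb{Z}^3$ is a product set, so $S_n$ factors as a composition of three commuting one-dimensional partial-sum operators, one in each variable:
$$ S_n = S_n^{(1)} \circ S_n^{(2)} \circ S_n^{(3)}, $$
where $S_n^{(j)}$ truncates only the $j$-th frequency variable. By Fubini's theorem, an iterated application of a uniform one-dimensional bound $\norm{S_n^{(j)} g}_{L^p(\mathbb{T})} \leq C_p \norm{g}_{L^p(\mathbb{T})}$ (applied with the remaining two variables frozen) yields the three-dimensional estimate with constant $c_p = C_p^3$.

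The core step is therefore the one-dimensional bound. Writing $M_a$ for multiplication by $e^{iax}$ (which is an $L^p$ isometry) and $P_+$ for the Riesz projection onto non-negative Fourier modes, a direct frequency-side computation gives the identity
$$ S_n^{(1)} f = M_{-n}\, P_+ M_n f \,-\, M_{n+1}\, P_+ M_{-(n+1)} f, $$
which expresses $S_n^{(1)}$ as a finite linear combination of modulated Riesz projections. The M.\ Riesz theorem, which asserts that $P_+$ is bounded on $L^p(\mathbb{T})$ for every $1 < p < \infty$, then yields the desired uniform-in-$n$ bound. This is the step where the exclusion of the endpoints $p = 1$ and $p = \infty$ is essential, since $P_+$ is unbounded there.

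Finally I would obtain the convergence $\norm{S_n w - w}_{L^p} \to 0$ by a standard density argument. Trigonometric polynomials are dense in $L^p(\mathbb{T}^3)$ for $1 \leq p < \infty$, and $S_n P = P$ for any such polynomial once $n$ exceeds its degree. Given $\varepsilon > 0$ and $w \in L^p(\mathbb{T}^3)$, choose a trigonometric polynomial $P$ with $\norm{w - P}_{L^p} < \varepsilon$; the triangle inequality together with the uniform bound gives
$$ \norm{S_n w - w}_{L^p} \leq \norm{S_n(w - P)}_{L^p} + \norm{S_n P - P}_{L^p} + \norm{P - w}_{L^p} \leq (c_p + 1)\varepsilon $$
for all $n$ sufficiently large.

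The main obstacle is really just the one-dimensional M.\ Riesz theorem, whose proof (via duality and interpolation, or via the boundedness of the Hilbert transform) is classical but not entirely elementary. In a paper of this scope I would simply cite it from a standard reference (for instance Grafakos, \emph{Classical Fourier Analysis}) rather than reprove it, since the theorem functions here only as a black box supporting the mollification-and-truncation approximation argument used in the proof of Theorem~\ref{twr:ee}.
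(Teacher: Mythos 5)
Your argument is correct: the factorisation of the cubic truncation into three commuting one-dimensional partial-sum operators, the identity expressing $S_n^{(1)}$ as a difference of modulated Riesz projections, and the density argument for convergence are all sound, and this is precisely the standard proof of the result. Note that the paper itself does not prove this theorem at all --- it is quoted as a known fact with a pointer to Theorem $1.6$ of \cite{RRSbook}, whose proof follows the same route you describe --- so your write-up simply supplies the classical argument that the paper (reasonably) treats as a black box.
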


We remark that the convergence $\tilde{S}_n(w) \to w$ in $L^p(\mathbb{T}^3)$ does not hold in general if we sum over Fourier modes with $\abs{k} \leq n$, 
$$ \tilde{S}_n(w) := \sum_{\abs{k} \leq n}\hat{w}_ke^{ik\cdot x} $$
(see e.g. Section $1.6$ in \cite{RRSbook} for a brief discussion of this result and for some further references).

Now we can prove the following density result which will be used in the~proof of Theorem $\ref{twr:ee}$.
\begin{lemma} \label{density}
$\mathcal{D}_{\sigma}({\mathbb{T}^3 \times [0, T)})$ is dense in $L^{4}({0, T; X_4(\mathbb{T}^3)}) \cap L^{2}({0, T; V})$.
\end{lemma}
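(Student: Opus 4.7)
The plan is to approximate a given $u \in L^{4}(0, T; X_4(\mathbb{T}^3)) \cap L^{2}(0, T; V)$ by elements of $\mathcal{D}_{\sigma}(\mathbb{T}^3 \times [0,T))$ through three successive regularisations: first a cutoff in time, then a mollification in time, then a truncation of the Fourier series in space. Each regularisation must converge in both norms of the intersection, and the spatial truncation must additionally preserve the zero-mean and divergence-free constraints on $\mathbb{T}^3$.

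For the time cutoff, fix $\delta > 0$ and a smooth scalar $\chi_\delta \in C^\infty(\mathbb{R})$ with $\chi_\delta \equiv 1$ on $[2\delta, T-2\delta]$ and $\suppa \chi_\delta \subset [\delta, T-\delta]$, and set $u_\delta(t) := \chi_\delta(t)\, u(t)$, extended by zero outside $[0, T]$. Dominated convergence gives $u_\delta \to u$ in $L^4(0,T; X_4) \cap L^2(0,T; V)$ as $\delta \to 0^+$. Next, for $0 < h < \delta/2$, set $u_\delta^h := u_\delta \ast \eta_h$, where the convolution is in $t$ only; its support lies in $[\delta/2, T-\delta/2]$, which is compact in $[0,T)$. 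Lemma \ref{timemollifier}, applied once with $X = X_4$ and once with $X = V$, gives $u_\delta^h \in C^\infty([0,T); X_4) \cap C^\infty([0,T); V)$ together with $u_\delta^h \to u_\delta$ in $L^4(0,T; X_4) \cap L^2(0,T; V)$ as $h \to 0^+$.

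For the spatial step, apply the projection $S_n$ of Theorem \ref{fourierinlp}, setting $v_{\delta,h,n}(x,t) := S_n\bigl(u_\delta^h(\cdot, t)\bigr)(x)$. This is a trigonometric polynomial in $x$ whose finitely many Fourier coefficients (for $k \in \mathcal{Q}_n$) are smooth functions of $t$, so $v_{\delta,h,n} \in C^\infty(\mathbb{T}^3 \times \mathbb{R})$; its $t$-support is unchanged and hence compact in $[0, T)$. Because both the zero-mean condition $\hat u_0 = 0$ and the divergence-free condition $k \cdot \hat u_k = 0$ are diagonal in the Fourier basis on $\mathbb{T}^3$, both survive the action of $S_n$, and $v_{\delta,h,n} \in \mathcal{D}_{\sigma}(\mathbb{T}^3 \times [0,T))$. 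Theorem \ref{fourierinlp} together with dominated convergence in $t$ then gives $v_{\delta,h,n} \to u_\delta^h$ in $L^4(0,T; L^4)$; since $S_n$ commutes with spatial derivatives, applying the same theorem to $\nabla u_\delta^h$ upgrades this to convergence in $L^2(0,T; V)$.

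A standard diagonal extraction through $(\delta_k, h_k, n_k)$, sending $\delta_k \to 0$, then $h_k \to 0$, then $n_k \to \infty$, produces the required sequence in $\mathcal{D}_{\sigma}(\mathbb{T}^3 \times [0,T))$. The only delicate point is the spatial step: the regulariser must simultaneously preserve the algebraic constraints and converge in both Bochner spaces. Preservation works because on $\mathbb{T}^3$ these constraints are encoded mode-by-mode in the Fourier basis; convergence in $L^2(0,T; V)$ works because $S_n$ commutes with $\nabla$, allowing the $L^p$-boundedness of $S_n$ to be lifted to $W^{1,p}$-boundedness whenever the target already has a derivative in $L^p$. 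This is exactly the reason why the corresponding density statement on a bounded domain with Dirichlet boundary conditions is substantially more delicate, as noted in the introduction.
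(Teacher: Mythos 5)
Your proof is correct and follows essentially the same route as the paper: mollification in time via Lemma \ref{timemollifier} followed by the truncated Fourier projection $S_n$ of Theorem \ref{fourierinlp}, with pointwise-in-$t$ convergence upgraded to the Bochner norms by dominated convergence and a triangle-inequality/diagonal argument. Your preliminary smooth cutoff in time is an extra step the paper omits; it only serves to make the compact-support-in-$[0,T)$ requirement explicit, and otherwise the two arguments coincide.
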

\begin{proof}
Let $w \in L^{4}({0, T; X_4(\mathbb{T}^3)}) \cap L^{2}({0, T; V})$ and define
$$ w_{n}^{h}(x, t) := S_n(w(x, t)^h) = \sum_{k \in \mathcal{Q}_n}{\hat{w}_k^{h}(t)}e^{ik\cdot x} \quad \mbox{for} \quad h \in(0, T), $$
where $S_n$ is the same as in $(\ref{fouriertruncation})$.
Clearly, $w_{n}^{h} \in \mathcal{D}_{\sigma}({\mathbb{T}^3 \times [0, T)})$.
By Theorem $\ref{fourierinlp}$ we have
\begin{equation} \label{convgalerkinl4}
\lim_{n \to \infty}{\norm{w_{n}^{h}(t) - w^{h}(t)}_{L^4(\mathbb{T}^3)}^4} = 0
\end{equation}
and
\begin{equation} \label{convgalerkinv}
\lim_{n \to \infty}{\norm{w_{n}^{h}(t) - w^{h}(t)}_V^2} = \lim_{n \to \infty}\lr{{\norm{w_{n}^{h}(t) - w^{h}(t)}^2} + {\norm{\nabla w_{n}^{h}(t) - \nabla w^{h}(t)}^2}} = 0
\end{equation}
for all $t \in [0, T)$.
 By Lemma $\ref{timemollifier}$, for a given $\varepsilon > 0$, we can choose $h > 0$ so small that
\begin{align} \label{withepsilon}
\calkat{\norm{w^{h}(t) - w(t)}_{L^4(\mathbb{T}^3)}^{4}} < \varepsilon \quad \mbox{and} \quad \calkat{\norm{w^{h}(t) - w(t)}_{V}^{2}} < \varepsilon.
\end{align}
On the other hand, from $(\ref{convgalerkinl4})$, $(\ref{convgalerkinv})$, and the Lebesgue Dominated Convergence Theorem, we have that for all fixed $h \in (0, T)$
\begin{align} \label{withn}
\lim_{n \to \infty}\calkat{\norm{w_{n}^{h}(t) - w^{h}(t)}_{L^4(\mathbb{T}^3)}^{4}} = 0 \quad \mbox{and} \quad \lim_{n \to \infty}\calkat{\norm{w_{n}^{h}(t) - w^{h}(t)}_{V}^{2}} = 0,
\end{align}
since, respectively, $\norm{w_{n}^{h}(t)}_{L^4(\mathbb{T}^3)} \leq c\norm{w^{h}(t)}_{L^4(\mathbb{T}^3)}$, and $\norm{w_{n}^{h}(t)}_V \leq c\norm{w^{h}(t)}_{V}$ for all $n \in \mathbb{N}$ and $t \in [0, T)$, and
$$ w^{h} \in L^4(0, T; X_4(\mathbb{T}^3)) \cap L^2(0, T; V). $$
Thus, the lemma follows from the relations $(\ref{withepsilon})$, $(\ref{withn})$ and the triangle inequality.
\end{proof}

Now, we are in a position to prove Theorem $\ref{twr:ee}$.
\begin{proof}
Let $\lrb{u_n}_{n = 1}^{\infty} \subset \mathcal{D}_{\sigma}({\mathbb{T}^3 \times [0, T)})$ be a sequence converging to a weak solution $u$ in $L^4({0, T; X_4(\mathbb{T}^3)})$ and in $L^2({0, T; V})$, see Lemma $\ref{density}$. For every fixed time instant $t_1\in (0, T)$, we choose in Definition \ref{defi:slabecbfr} (with $t_0 = 0$) a sequence of test functions
\begin{align} \nonumber
\varphi_{n}^{h}(x ,s) &:= \lr{u_n(x, s)\chi_{[0, t_1]}(s)}^{h} = \lr{u_n\chi_{[0, t_1]} \ast \eta_{h}}(x, s) \\ \nonumber
&= \int_{0}^{T}{u_n(x, \tau)\chi_{[0, t_1]}(\tau)\eta_{h}(s - \tau) \, \mathrm{d}\tau} = \int_{0}^{t_1}{u_n(x, \tau)\eta_{h}(s - \tau) \, \mathrm{d}\tau}
\end{align}
for $(x, s) \in \mathbb{T}^3 \times [0, T)$, with the parameter $h$ satisfying the following conditions:
$$ 0 < h < T - t_1 \quad \mbox{and} \quad h < t_1. $$
We obtain a sequence of equations
\begin{align} \nonumber
-\int_{0}^{t_1}{\dual{u(s)}{\partial_t(u_n\chi_{[0, t_1]})^{h}(s)}\, \mathrm{d}s} &+ \mu\int_{0}^{t_1}{\dual{\nabla u(s)}{\nabla (u_n\chi_{[0, t_1]})^{h}(s)}\, \mathrm{d}s} \\ \nonumber
+ \int_{0}^{t_1}{\dual{(u(s) \cdot \nabla)u(s)}{(u_n\chi_{[0, t_1]})^{h}(s)}\, \mathrm{d}s} &+ \beta\int_{0}^{t_1}{\dual{\abs{u(s)}^{2}u(s)}{(u_n\chi_{[0, t_1]})^{h}(s)}\, \mathrm{d}s} \\ \label{seqofweakforms}
= -\dual{u(t_1)}{(u_n\chi_{[0, t_1]})^{h}(t_1)} &+ \dual{u(0)}{(u_n\chi_{[0, t_1]})^{h}(0)}.
\end{align}

Note that our choice of $h$ ensures that $\varphi_{n}^{h}(x ,T) = 0$. Additionally, observe that the functions $\varphi_{n}^{h}$ are divergence-free, since $\dv{\varphi_{n}^{h}} =(\dv{\varphi_n})^{h} = 0$, so indeed $\varphi_n^{h} \in \mathcal{D}_{\sigma}({\mathbb{T}^3 \times [0, T)})$.

We want to pass to the limit in $(\ref{seqofweakforms})$ as $n \to \infty$. To this end, using H\"older's inequality and Lemma $\ref{timemollifier}$, we observe the following estimates for the nonlinear terms:
\begin{align} \nonumber
&\abs{\int_{0}^{t_1}{\dual{(u(s) \cdot \nabla)u(s)}{(u_n\chi_{[0, t_1]})^{h}(s)} \, \mathrm{d}s} - \int_{0}^{t_1}{\dual{(u(s) \cdot \nabla)u(s)}{(u\chi_{[0, t_1]})^{h}(s)} \, \mathrm{d}s}} \\ \nonumber
&\leq \int_{0}^{t_1}{{\norm{u(s)}_{L^4(\mathbb{T}^3)}\norm{\nabla u(s)}\norm{(u_n\chi_{[0, t_1]})^{h}(s) - (u\chi_{[0, t_1]})^{h}(s)}_{L^4(\mathbb{T}^3)}}} \, \mathrm{d}s \\ \label{convectiveterm}
&\leq \norm{u}_{L^4({0, T; X_4(\mathbb{T}^3)})}\norm{u}_{L^2({0, T; V})}\norm{(u_n\chi_{[0, t_1]})^{h} - (u\chi_{[0, t_1]})^{h}}_{L^4({0, T; X_4(\mathbb{T}^3)})} \to 0
\end{align}
as $n \to \infty$, and
\begin{align} \nonumber
&\abs{\int_{0}^{t_1}{\dual{\abs{u(s)}^2u(s)}{(u_n\chi_{[0, t_1]})^{h}(s)} \, \mathrm{d}s} - \int_{0}^{t_1}{\dual{\abs{u(s)}^2u(s)}{(u\chi_{[0, t_1]})^{h}(s)} \, \mathrm{d}s}} \\ \nonumber
&\leq \int_{0}^{t_1}{{\norm{u(s)}^3_{L^4(\mathbb{T}^3)}\norm{(u_n\chi_{[0, t_1]})^{h}(s) - (u\chi_{[0, t_1]})^{h}(s)}_{L^4(\mathbb{T}^3)}}} \, \mathrm{d}s \\ \label{absorptionterm}
&\leq \norm{u}^3_{L^4({0, T; X_4(\mathbb{T}^3)})}\norm{(u_n\chi_{[0, t_1]})^{h} - (u\chi_{[0, t_1]})^{h}}_{L^4({0, T; X_4(\mathbb{T}^3)})} \to 0 \quad \mbox{as} \quad n \to \infty.
\end{align}
Estimating the linear terms in a standard way and using $({\ref{convectiveterm}})$, $({\ref{absorptionterm}})$ we can pass in the weak formulation $(\ref{seqofweakforms})$ to the limit as $n \to \infty$. We arrive at the identity
\begin{align} \nonumber
-\int_{0}^{t_1}{\dual{u(s)}{\partial_t(u\chi_{[0, t_1]})^{h}(s)}\, \mathrm{d}s} &+ \mu\int_{0}^{t_1}{\dual{\nabla u(s)}{\nabla (u\chi_{[0, t_1]})^{h}(s)}\, \mathrm{d}s} \\ \nonumber
+ \int_{0}^{t_1}{\dual{(u(s) \cdot \nabla)u(s)}{(u\chi_{[0, t_1]})^{h}(s)}\, \mathrm{d}s} &+ \beta\int_{0}^{t_1}{\dual{\abs{u(s)}^{2}u(s)}{(u\chi_{[0, t_1]})^{h}(s)}\, \mathrm{d}s} \\ \nonumber
= -\dual{u(t_1)}{(u\chi_{[0, t_1]})^{h}(t_1)} &+ \dual{u(0)}{(u\chi_{[0, t_1]})^{h}(0)}.
\end{align}
Since the function $\eta_{h}$ is even in $(-h, h)$, we have $\dot{\eta}_{h}(r) = -\dot{\eta}_{h}(-r)$ and so
\begin{align} \nonumber
\int_{0}^{t_1}{\dual{u(s)}{\partial_t(u\chi_{[0, t_1]})^{h}(s)}\, \mathrm{d}s} &= \int_{0}^{t_1}{\lr{\int_{0}^{t_1}{\dot{\eta}_{h}(s - \tau)\dual{u(s)}{u(\tau)} \, \mathrm{d}\tau}} \, \mathrm{d}s} \\ \nonumber
&= -\int_{0}^{t_1}{\lr{\int_{0}^{t_1}{\dot{\eta}_{h}(\tau - s)\dual{u(s)}{u(\tau)} \, \mathrm{d}\tau}} \, \mathrm{d}s} \\ \nonumber
&= -\int_{0}^{t_1}{\lr{\int_{0}^{t_1}{\dot{\eta}_{h}(\tau - s)\dual{u(\tau)}{u(s)} \, \mathrm{d}\tau}} \, \mathrm{d}s} \\ \nonumber
&= -\int_{0}^{t_1}{\lr{\int_{0}^{t_1}{\dot{\eta}_{h}(\tau - s)\dual{u(\tau)}{u(s)} \, \mathrm{d}s}} \, \mathrm{d}\tau} = 0.
\end{align}

Next, by repeating the arguments in $({\ref{convectiveterm}})$, $({\ref{absorptionterm}})$ with $(u\chi_{[0, t_1]})^{h}$ in place of $(u_n\chi_{[0, t_1]})^{h}$ and $u\chi_{[0, t_1]}$ in place of $(u\chi_{[0, t_1]})^{h}$, we obtain
\begin{align} \nonumber
\lim_{h \to 0}\int_{0}^{t_1}{\dual{(u(s) \cdot \nabla)u(s)}{(u\chi_{[0, t_1]})^{h}(s)} \, \mathrm{d}s} &= \int_{0}^{t_1}{\dual{(u(s) \cdot \nabla)u(s)}{(u\chi_{[0, t_1]})(s)} \, \mathrm{d}s} \\ \nonumber
&= \int_{0}^{t_1}{\dual{(u(s) \cdot \nabla)u(s)}{u(s)} \, \mathrm{d}s}= 0, \\ \nonumber
\lim_{h \to 0}\int_{0}^{t_1}{\dual{\abs{u(s)}^2u(s)}{(u\chi_{[0, t_1]})^{h}(s)} \, \mathrm{d}s} &= \int_{0}^{t_1}{\dual{\abs{u(s)}^2u(s)}{(u\chi_{[0, t_1]})(s)} \, \mathrm{d}s}, \\ \nonumber
\lim_{h \to 0}\int_{0}^{t_1}{\dual{\nabla u(s)}{\nabla (u\chi_{[0, t_1]})^{h}(s)} \, \mathrm{d}s} &= \int_{0}^{t_1}{\dual{\nabla u(s)}{\nabla(u\chi_{[0, t_1]})(s)} \, \mathrm{d}s},
\end{align}
which give us
\begin{align} \nonumber
\mu\int_{0}^{t_1}{\norm{\nabla u(s)}^2 \, \mathrm{d}s} + \beta\int_{0}^{t_1}{\norm{u(s)}_{L^4(\mathbb{T}^3)}^4 \, \mathrm{d}s} &= -\lim_{h \to 0}\dual{u(t_1)}{(u\chi_{[0, t_1]})^{h}(t_1)} \\ \nonumber
&+ \lim_{h \to 0}\dual{u(0)}{(u\chi_{[0, t_1]})^{h}(0)}.
\end{align}
Finally, from the fact that $u$ is $L^2$-weakly continuous in time and from $(\ref{half})$, we have
\begin{align} \nonumber
&\dual{u(t_1)}{(u\chi_{[0, t_1]})^{h}(t_1)} = \int_{0}^{T}{\eta_{h}(s)\chi_{[0, t_1]}(t_1 - s)\dual{u(t_1)}{u(t_1- s)} \, \mathrm{d}s} \\ \nonumber
&= \int_{0}^{t_1}{\eta_{h}(s)\dual{u(t_1)}{u(t_1 - s)} \, \mathrm{d}s} = \int_{0}^{h}{\eta_{h}(s)\dual{u(t_1)}{u(t_1 - s)} \, \mathrm{d}s} \\ \nonumber
&= \frac{1}{2}\norm{u(t_1)}^2 + \int_{0}^{h}{\eta_{h}(s)\dual{u(t_1)}{u(t_1 - s) - u(t_1)} \, \mathrm{d}s} \to \frac{1}{2}\norm{u(t_1)}^2
\end{align}
as $h \to 0$. In the same manner we show that
\begin{align} \nonumber
\dual{u(0)}{(u\chi_{[0, t_1]})^{h}(0)} \to \frac{1}{2}\norm{u(0)}^2 \quad \mbox{as} \quad h \to 0.
\end{align}
Finally, we obtain the identity
\begin{align} \label{proofee}
\frac{1}{2}\norm{u(t_1)}^2 + \mu\int_{0}^{t_1}{\norm{\nabla u(s)}^2 \, \mathrm{d}s} + \beta\int_{0}^{t_1}{\norm{u(s)}_{L^4(\mathbb{T}^3)}^4 \, \mathrm{d}s} = \frac{1}{2}\norm{u(0)}^2
\end{align}
for all $t_1 \in (0, T).$
The energy equality $(\ref{equality})$ follows easily from $(\ref{proofee})$.

Now we will prove the last part of the theorem, namely that all weak solutions of the~critical CBF equations $(\ref{alphazerocrit})$ are continuous into $L^2$ with respect to time, i.e.
\begin{equation} \label{strongconv}
\norm{u(t) - u(t_0)} \to 0 \quad \mbox{as} \quad t \to t_0
\end{equation}
for all $t_0 \in [0, T)$.

First, we remark that all weak solutions of $(\ref{cbfequation})$ are $L^2$-weakly continuous with respect to time
\begin{equation} \label{weakctn}
u(t) \rightharpoonup u(t_ 0) \quad \mbox{as} \quad t \to t_0
\end{equation}
for all $t_0 \in [0, T)$. This can be proved similarly as for the Navier--Stokes equations and follows immediately from the definition of a weak solution (see e.g.\ Lemma $2.2$ in \cite{Galdi} for the details).
	
Now, let $u$ be a weak solution of $(\ref{alphazerocrit})$. It suffices to have the energy inequality to obtain convergence of the norms, since from $(\ref{energyineq})$ we deduce immediately that
$$ \limsup_{t \to t_0} \norm{u(t)}^2 \leq \norm{u(t_0)}^2 $$
and from the weak continuity $(\ref{weakctn})$ we have
$$ \liminf_{t \to t_0} \norm{u(t)}^2 \geq \norm{u(t_0)}^2. $$
Therefore, it follows from the first part of Theorem \ref{twr:ee} that for all weak solutions of $(\ref{alphazerocrit})$ we have
\begin{equation} \label{cbfweaknormconv}
u(t) \rightharpoonup u(t_0) \quad \mbox{and} \quad \norm{u(t)} \to \norm{u(t_0)} \quad \mbox{as} \quad t \to t_0.
\end{equation}
Since in a Hilbert space weak convergence and convergence of norms implies  strong convergence, the result $(\ref{strongconv})$ follows immediately from $(\ref{cbfweaknormconv})$.
\end{proof}

\section{Strong global attractor}

We have now proved that all weak solutions of the convective Brinkman--\linebreak Forchheimer equations with critical exponent $r = 3$ satisfy the energy equality. As a consequence, we obtained unconditional continuity of all weak solutions into~$L^2$.

In the paper of Ball \cite{Ball} it was shown for the three-dimensional incompressible Navier--Stokes equations that strong $L^2$-continuity leads to the existence of a global attractor in the phase space $H$. Due to technical difficulties we cannot apply his method of generalised semiflows to our problem. Instead, we make use of the theory of evolutionary systems due to Cheskidov \cite{Cheskidov} to show existence of a strong global attractor for the critical convective Brinkman--Forchheimer equations $(\ref{alphazerocrit})$. First, we introduce some necessary notation from \cite{Cheskidov}.

\subsection{Evolutionary systems}

Let $(X, d_s(\cdot, \cdot))$ be a metric space endowed with a~metric $d_s$, which will be referred to as a strong metric. Let $d_w(\cdot, \cdot)$ be another metric on $X$ satisfying the following conditions:
\begin{enumerate}[label = \arabic*.]
\item \label{wcompact} $X$ is $d_w$-compact.

\item \label{simplyw} If $d_s(u_n, v_n) \to 0$ as $n \to \infty$ for some $u_n, v_n \in X$, then $d_w(u_n, v_n) \to 0$ as $n \to \infty$.
\end{enumerate}
Note that any $d_s$-compact set is $d_w$-compact and any weakly closed set is strongly closed.

Let $C([a, b]; X_{\bullet})$, where $\bullet \in \lrb{s, w}$, be the space of $d_{\bullet}$-continuous $X$-valued functions on $[a, b]$ endowed with the metric
$$ d_{C([a, b]; X_{\bullet})}(u, v) := \sup_{t \in [a, b]}\lrb{d_{\bullet}(u(t), v(t))}. $$
Let also $C([a, \infty); X_{\bullet})$ be the space of $d_{\bullet}$-continuous $X$-valued functions on $[a, \infty)$ endowed with the metric
$$ d_{C([a, \infty); X_{\bullet})}(u, v) := \sum_{T \in \mathbb{N}}{\frac{1}{2^T}\frac{\sup\lrb{d_{\bullet}(u(t), v(t)) : a \leq t \leq a + T}}{1 + \sup\lrb{d_{\bullet}(u(t), v(t)) : a \leq t \leq a + T}}}. $$

To define an evolutionary system, first let
$$ \mathcal{T} := \lrb{I: I = [T, \infty) \subset \mathbb{R} \mbox{ for } T \in \mathbb{R}, \mbox{ or } I = (-\infty, \infty)}, $$
and for each $I \subset \mathcal{T}$, let $\mathcal{F}(I)$ denote the set of all $X$-valued functions on $I$.
\begin{definition}
A map $\mathcal{E}$ that associates to each $I \in \mathcal{T}$ a subset $\mathcal{E}(I) \subset \mathcal{F}(I)$ will be called an \emph{evolutionary system} if the following conditions are satisfied:
\begin{enumerate}
\item $\mathcal{E}([0, \infty)) \neq \emptyset$.
	
\item $\mathcal{E}(I + s) = \lrb{u(\cdot) : u(\cdot - s) \in \mathcal{E}(I)}$ for all $s \in \mathbb{R}$.
	
\item $\lrb{u(\cdot)|_{I_2} : u(\cdot) \in \mathcal{E}(I_1)} \subset \mathcal{E}(I_2)$ for all pairs $I_1, I_2 \in \mathcal{T}$, such that $I_2 \subset I_1$.
	
\item $\mathcal{E}((-\infty, \infty)) = \lrb{u(\cdot) : u(\cdot)|_{[T, \infty)} \in \mathcal{E}([T, \infty)) \mbox{ for all } T \in \mathbb{R}}$.
\end{enumerate}
\end{definition}
We will refer to $\mathcal{E}(I)$ as \textit{the set of all trajectories} on the time interval $I$. Trajectories in $\mathcal{E}((-\infty, \infty))$ will be called \emph{complete}. To relate the notion of evolutionary systems with the classical notion of semiflows, let $P(X)$ be the set of all subsets of $X$. For every $t \geq 0$, define a map $R(t) : P(X) \to P(X)$, such that
$$ R(t)A := \lrb{u(t) : u \in A, u \in \mathcal{E}([0, \infty))} \quad \mbox{for} \quad A \subset X. $$
Note that the assumptions on $\mathcal{E}$ imply that $R(t)$ enjoys the following property:
$$ R(t + s)A \subset R(t)R(s)A, \quad A \subset X, \ t,s \geq 0. $$
One can check that a semiflow defines an evolutionary system (see details in \cite{Cheskidov}).

Furthermore, we will consider evolutionary systems $\mathcal{E}$ satisfying the following assumptions:

\begin{enumerate}[label=(\subscript{A}{\arabic*})]
\item \label{Aone} (\emph{Weak compactness}) $\mathcal{E}([0, \infty))$ is a compact set in $C([0, \infty); X_w)$.
	
\item \label{Atwo} (\emph{Energy inequality}) Assume that $X$ is a bounded set in some uniformly convex Banach space $H$ with the norm denoted by $\norm{\cdot}$, such that 
$$ d_s(x, y) = \norm{x - y} \quad \mbox{for} \quad x, y \in X. $$
Assume also that for any $\varepsilon > 0$, there exists $\delta > 0$, such that for every $u \in \mathcal{E}([0, \infty))$ and $t > 0$,
$$ \norm{u(t)} \leq \norm{u(t_0)} + \varepsilon, $$
for $t_0 \in (t - \delta, t)$.
	
\item \label{Athree} (\emph{Strong convergence a.e.}) Let $u, u_n \in \mathcal{E}([0, \infty))$ be such that $u_n \to u$ in $C([0, T]; X_w)$ for some $T > 0$. Then $u_n(t) \to u(t)$ strongly for a.e. $t \in [0, T]$.
\end{enumerate}

Consider an arbitrary evolutionary system $\mathcal{E}$. For a set $A \subset X$ and $r > 0$, denote an open ball
$$ B_{\bullet}(A, r) := \lrb{u \in X : d_{\bullet}(u, A) < r}, $$
where
$$ d_{\bullet}(u, A) := \inf_{x \in A}\lrb{d_{\bullet}(u, x)}. $$

\begin{definition}
A set $A \subset X$ \textit{uniformly attracts} a set $B \subset X$ in the $d_{\bullet}$-metric if for any $\varepsilon > 0$ there exists $t_0$, such that
$$ R(t)B \subset B_{\bullet}(A, \varepsilon), \quad \forall \, t \geq t_0. $$
\end{definition}

\begin{definition}
A set $A \subset X$ is a \textit{$d_{\bullet}$-attracting set} if it uniformly attracts $X$ in the $d_{\bullet}$-metric.
\end{definition}

\begin{definition}
A set $\mathcal{A}_{\bullet} \subset X$ is a \textit{$d_{\bullet}$-global attractor} if $\mathcal{A}_{\bullet}$ is a minimal $d_{\bullet}$-closed, $d_{\bullet}$-attracting set. 
\end{definition}

\begin{theorem} \label{weakglobalattractor}
Every evolutionary system possesses a weak global attractor $\mathcal{A}_{w}$. Moreover, if a strong global attractor $\mathcal{A}_{s}$ exists, then $\overline{\mathcal{A}_{s}}^{w} = \mathcal{A}_{w}$.
\end{theorem}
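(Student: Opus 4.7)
The natural candidate is the weak $\omega$-limit of the whole phase space,
$$\mathcal{A}_w := \bigcap_{T \geq 0} \overline{\bigcup_{t \geq T} R(t) X}^{w}.$$
First I would check that this set is non-empty: the sets $K_T := \overline{\bigcup_{t \geq T} R(t) X}^{w}$ form a nested family of weakly closed subsets of the $d_w$-compact space $X$, so the finite intersection property gives $\mathcal{A}_w \neq \emptyset$. Weak closedness of $\mathcal{A}_w$ is immediate from the definition as an intersection of weakly closed sets.

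Next I would prove that $\mathcal{A}_w$ is weakly attracting by contradiction. If it failed, there would exist $\varepsilon > 0$, a sequence $t_n \to \infty$, and trajectories $u_n \in \mathcal{E}([0,\infty))$ with $d_w(u_n(t_n), \mathcal{A}_w) > \varepsilon$. By $d_w$-compactness of $X$, some subsequence $u_{n_k}(t_{n_k})$ converges weakly to a limit $v$; since $t_{n_k} \geq T$ eventually, we have $u_{n_k}(t_{n_k}) \in \bigcup_{t \geq T} R(t) X$ for all large $k$, so $v \in K_T$ for every $T \geq 0$, i.e.\ $v \in \mathcal{A}_w$, contradicting the lower bound on $d_w(u_{n_k}(t_{n_k}), \mathcal{A}_w)$. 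For minimality, any weakly closed, weakly attracting set $A \subset X$ must contain every $v \in \mathcal{A}_w$: writing $v$ as a weak limit of a sequence $u_n(t_n)$ with $u_n \in \mathcal{E}([0,\infty))$ and $t_n \to \infty$, weak attraction of $A$ gives $d_w(u_n(t_n), A) \to 0$, and weak closedness of $A$ forces $v \in A$.

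For the second claim I would first upgrade the strong attraction of $\mathcal{A}_s$ to weak attraction: if this failed, one could find $u_n \in B_s(\mathcal{A}_s, 1/n) \setminus B_w(\mathcal{A}_s, \varepsilon_0)$ for some $\varepsilon_0 > 0$, pick nearest points $v_n \in \mathcal{A}_s$ with $d_s(u_n, v_n) \to 0$, and invoke property (2) of the metric $d_w$ to get $d_w(u_n, v_n) \to 0$, a contradiction. Hence $\overline{\mathcal{A}_s}^{w}$ is weakly closed and weakly attracting, and the minimality of $\mathcal{A}_w$ yields $\mathcal{A}_w \subset \overline{\mathcal{A}_s}^{w}$. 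For the reverse inclusion I would prove $\mathcal{A}_s \subset \mathcal{A}_w$ via the strong $\omega$-limit representation $\mathcal{A}_s = \bigcap_T \overline{\bigcup_{t \geq T} R(t) X}^{s}$ of a minimal strongly closed, strongly attracting set; since $\overline{B}^{s} \subset \overline{B}^{w}$ for any set $B$ (the weak topology being coarser), one then obtains $\mathcal{A}_s \subset \mathcal{A}_w$. The weak closedness of $\mathcal{A}_w$ finally gives $\overline{\mathcal{A}_s}^{w} \subset \mathcal{A}_w$.

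The hard part is the inclusion $\mathcal{A}_s \subset \mathcal{A}_w$: unlike in the classical semigroup setting, here $R(t)$ only satisfies the weaker concatenation $R(t+s) \subset R(t) R(s)$, so the identification of the minimal strong attractor with the strong $\omega$-limit must be verified carefully from the structural axioms of an evolutionary system rather than invoked as a textbook result. An alternative route would be to establish a negative-invariance property $\mathcal{A}_s \subset R(t) \mathcal{A}_s$ for every $t \geq 0$ (using the minimality of $\mathcal{A}_s$ together with axioms (2)--(3) of $\mathcal{E}$), which immediately yields $\mathcal{A}_s \subset R(t) X \subset \overline{R(t) X}^{w}$ for all $t$, and hence $\mathcal{A}_s \subset \mathcal{A}_w$.
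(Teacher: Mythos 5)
First, a caveat on the comparison you asked for: the paper does not prove Theorem \ref{weakglobalattractor} at all --- it is quoted from Cheskidov's theory of evolutionary systems \cite{Cheskidov} --- so there is no in-paper proof to measure your argument against; I am judging it on its own terms within that framework. Your first half is correct and is the standard argument: taking $\mathcal{A}_w = \bigcap_{T \geq 0}\overline{\bigcup_{t\geq T}R(t)X}^{w}$, non-emptiness via the finite intersection property in the $d_w$-compact space $X$, attraction by contradiction using sequential $d_w$-compactness, and minimality by exhibiting each point of $\mathcal{A}_w$ as a $d_w$-limit of points $u_n(t_n)$ with $t_n \to \infty$. The inclusion $\mathcal{A}_w \subset \overline{\mathcal{A}_s}^{w}$ via "strong attraction implies weak attraction" (condition 2 on the metrics) is also complete.

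The genuine gap is exactly where you flag it, and neither of your proposed routes closes it. Route (a) requires $\mathcal{A}_s = \omega_s(X)$: the inclusion $\omega_s(X) \subset \mathcal{A}_s$ is easy, but the direction you actually need --- that $\omega_s(X)$ is $d_s$-attracting, so that minimality forces $\mathcal{A}_s \subset \omega_s(X)$ --- is essentially an asymptotic-compactness statement and does not follow from the mere existence of a minimal strongly closed attracting set; you would be assuming something at least as hard as the conclusion. Route (b), the negative invariance $\mathcal{A}_s \subset R(t)\mathcal{A}_s$, is not available either: $R$ only satisfies $R(t+s)A \subset R(t)R(s)A$, and in this framework even invariance of the \emph{weak} attractor requires the extra hypotheses \ref{Aone} and \ref{Athree}, which are not part of the bare definition of an evolutionary system and hence cannot be invoked in this theorem. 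A correct way to finish is the following. Condition 2 on the metrics self-improves to a uniform modulus: for every $\varepsilon > 0$ there is $\delta > 0$ such that $d_s(u, v) < \delta$ implies $d_w(u, v) < \varepsilon$ for all $u, v \in X$ (otherwise one extracts sequences violating condition 2). Given $\rho > 0$, set $D_\rho := \mathcal{A}_s \cap \lrb{u \in X : d_w(u, \mathcal{A}_w) \leq \rho}$, which is $d_s$-closed (a weakly closed set is strongly closed). For $t$ large, every $x \in R(t)X$ admits $a \in \mathcal{A}_s$ with $d_s(x, a) < \min(\varepsilon, \delta)$, hence $d_w(x, a) < \rho/2$, while also $d_w(x, \mathcal{A}_w) < \rho/2$ since $\mathcal{A}_w$ attracts weakly; thus $a \in D_\rho$ and $R(t)X \subset B_s(D_\rho, \varepsilon)$. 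So $D_\rho$ is a non-empty, $d_s$-closed, $d_s$-attracting subset of $\mathcal{A}_s$, and minimality of $\mathcal{A}_s$ forces $D_\rho = \mathcal{A}_s$ for every $\rho > 0$. Letting $\rho \to 0$ gives $\mathcal{A}_s \subset \mathcal{A}_w$, and weak closedness of $\mathcal{A}_w$ then yields $\overline{\mathcal{A}_s}^{w} \subset \mathcal{A}_w$.
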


\begin{definition}
The evolutionary system $\mathcal{E}$ is \textit{asymptotically compact} if for any sequence $t_n \to \infty$ as $n \to \infty$, and any $x_n \in R(t_n)X$, the sequence $\lrb{x_n}_{n = 1}^{\infty}$ is relatively strongly compact.
\end{definition}

\begin{theorem} \label{asymptoticcompactness}
If an evolutionary system $\mathcal{E}$ is asymptotically compact, then $\mathcal{A}_{w}$ is a strongly compact strong global attractor $\mathcal{A}_s$.
\end{theorem}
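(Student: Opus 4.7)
My plan is to verify the three defining properties of a strong global attractor for $\mathcal{A}_w$: strong (sequential) compactness, strong attraction of $X$, and minimality among strongly closed strongly attracting sets. Before starting I would record as a lemma the \emph{invariance} of $\mathcal{A}_w$: every $y \in \mathcal{A}_w$ is of the form $u(0)$ for some complete trajectory $u \in \mathcal{E}((-\infty,\infty))$ whose orbit remains in $\mathcal{A}_w$. This is standard in Cheskidov's framework and is obtained by a diagonal extraction of weak limits of backwards shifts, using the weak compactness assumption \ref{Aone} together with the translation and restriction properties of $\mathcal{E}$.

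For strong compactness, I would take any sequence $\{x_n\} \subset \mathcal{A}_w$ and, using the invariance lemma, pick complete trajectories $u_n$ with $u_n(0) = x_n$. Then $x_n = R(n)\, u_n(-n)$ with $u_n(-n) \in X$, so $x_n \in R(n) X$ with $n \to \infty$. Asymptotic compactness yields a strongly convergent subsequence $x_{n_k} \to x$, and since strong convergence implies weak convergence (property 2 of the two metrics) and $\mathcal{A}_w$ is $d_w$-closed by Theorem \ref{weakglobalattractor}, the limit $x$ lies in $\mathcal{A}_w$. Hence $\mathcal{A}_w$ is strongly sequentially compact, and in particular strongly closed.

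For strong attraction I argue by contradiction: if $\mathcal{A}_w$ did not strongly attract $X$, there would exist $\varepsilon > 0$, $t_n \to \infty$ and $x_n \in R(t_n) X$ with $d_s(x_n, \mathcal{A}_w) \geq \varepsilon$. Asymptotic compactness furnishes a subsequence with $x_n \to x$ strongly for some $x \in X$. On the other hand $\mathcal{A}_w$ weakly attracts $X$, so $d_w(x_n, \mathcal{A}_w) \to 0$, and by property 2 together with the weak closedness of $\mathcal{A}_w$ we get $x \in \mathcal{A}_w$; then $d_s(x_n, \mathcal{A}_w) \leq d_s(x_n, x) \to 0$ contradicts the assumption. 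Minimality is then straightforward: given any strongly closed strongly attracting set $A$ and any $y \in \mathcal{A}_w$, pick a complete trajectory through $y$ via the invariance lemma and write $y = R(t)\, u(-t)$ for every $t > 0$; strong attraction gives $d_s(R(t)\, u(-t), A) \to 0$, i.e.\ $d_s(y, A) = 0$, so $y \in A$ by strong closedness. Thus $\mathcal{A}_w \subset A$.

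The main obstacle I anticipate is the preliminary invariance statement, which is used in both the compactness step and the minimality step; it is not stated in the excerpt and its proof requires the weak compactness \ref{Aone} plus a diagonal argument on backward shifts of trajectories passing $d_w$-close to a given $y \in \mathcal{A}_w$. Once that is in place, the three-step scheme above is almost mechanical, since asymptotic compactness automatically upgrades weak limits to strong limits and thereby closes the gap between the $d_w$-attractor and the $d_s$-attractor.
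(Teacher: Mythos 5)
The paper itself gives no proof of this theorem --- it is imported from Cheskidov's theory of evolutionary systems --- so I am comparing your argument with the standard proof in that reference. Your strong-attraction step (the contradiction argument combining asymptotic compactness with the uniform $d_w$-attraction and the $d_w$-closedness of $\mathcal{A}_w$) is correct as written and needs no trajectories at all. The gap lies in the other two steps, both of which rest on your preliminary ``invariance lemma'' that every $y \in \mathcal{A}_w$ equals $u(0)$ for some complete trajectory $u$. As you yourself note, proving that lemma requires the weak compactness assumption \ref{Aone} --- but \ref{Aone} is \emph{not} a hypothesis of Theorem \ref{asymptoticcompactness}. Compare with Theorem \ref{globalattractor}, which explicitly assumes \ref{Aone}--\ref{Athree}; the present theorem assumes only asymptotic compactness, which is a condition on sequences of points $x_n \in R(t_n)X$, not on the trajectory space, and so does not by itself produce complete trajectories through points of $\mathcal{A}_w$. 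As it stands, your argument proves the theorem only under the extra hypothesis \ref{Aone} (which happens to hold in the application in this paper, but not in the theorem as stated).

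The standard route avoids complete trajectories entirely. The proof of Theorem \ref{weakglobalattractor} constructs $\mathcal{A}_w$ as the weak $\omega$-limit $\omega_w(X) = \bigcap_{T \geq 0} \overline{\bigcup_{t \geq T} R(t)X}^{\,d_w}$, so every $x \in \mathcal{A}_w$ is a $d_w$-limit of points $x_n \in R(t_n)X$ with $t_n \to \infty$ --- exactly the raw material on which asymptotic compactness acts. A subsequence of the $x_n$ then converges strongly to some $y$; since strong convergence implies weak convergence and weak limits are unique, $y = x$, whence $\mathcal{A}_w = \omega_w(X) = \omega_s(X)$. Strong compactness of $\mathcal{A}_w$ follows by a diagonal selection from these approximating sequences, and minimality is immediate without any invariance: if $A$ is strongly closed and strongly attracting and $x = \lim_n x_n$ strongly with $x_n \in R(t_n)X$, then $d_s(x_n, A) \to 0$ forces $x \in A$, so $\mathcal{A}_w \subset A$. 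If you replace your invariance lemma by the identification $\mathcal{A}_w = \omega_w(X)$, which comes for free from the construction of the weak attractor, your three-step scheme goes through under the stated hypotheses.
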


\begin{theorem} \label{globalattractor}
Let $\mathcal{E}$ be an evolutionary system satisfying \emph{\ref{Aone}}, \emph{\ref{Atwo}}, and \emph{\ref{Athree}}. If every complete trajectory is strongly continuous, i.e.\ if
\begin{equation*}
\mathcal{E}((-\infty, \infty)) \subset C((-\infty, \infty); X_s),
\end{equation*}
then $\mathcal{E}$ is asymptotically compact.
\end{theorem}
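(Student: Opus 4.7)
The plan is to take a sequence $t_n \to \infty$ and points $x_n = u_n(t_n)$ with $u_n \in \mathcal{E}([0, \infty))$, shift time so that $x_n$ lies at the origin, extract a complete trajectory limit via weak compactness, and then upgrade weak convergence to strong convergence at $t = 0$ using the energy inequality, a.e.\ strong convergence, and the assumed strong continuity of complete trajectories. Concretely, set $v_n(t) := u_n(t + t_n)$, which by the translation and restriction properties of an evolutionary system lies in $\mathcal{E}([-t_n, \infty))$ and satisfies $v_n(0) = x_n$.

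By axiom \ref{Aone} (applied in its time-translated form on each bounded interval) combined with a Cantor diagonal extraction across $[-T, T]$ for $T = 1, 2, 3, \ldots$, I can pass to a subsequence, still denoted $v_n$, and a function $v \in \mathcal{E}((-\infty, \infty))$ such that $v_n \to v$ in $C([-T, T]; X_w)$ for every $T > 0$. The hypothesis of the theorem then gives $v \in C((-\infty, \infty); X_s)$.

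The crucial step is to upgrade the weak convergence $v_n(0) \rightharpoonup v(0)$ to strong convergence. Given $\varepsilon > 0$, let $\delta > 0$ be the constant provided by \ref{Atwo}; applied to $v_n$ this yields
\begin{equation*}
\norm{v_n(0)} \leq \norm{v_n(t_0)} + \varepsilon \quad \mbox{for every } t_0 \in (-\delta, 0).
\end{equation*}
Axiom \ref{Athree} on $[-\delta, 0]$ provides $v_n(t_0) \to v(t_0)$ strongly for a.e.\ $t_0$, so one may choose $t_0 \in (-\delta, 0)$ with $\norm{v_n(t_0)} \to \norm{v(t_0)}$, giving
\begin{equation*}
\limsup_{n \to \infty} \norm{v_n(0)} \leq \norm{v(t_0)} + \varepsilon.
\end{equation*}
Letting $\varepsilon \to 0$ along a sequence with $\delta \to 0$ and correspondingly $t_0 \to 0^-$, the strong continuity of $v$ yields $\limsup_n \norm{v_n(0)} \leq \norm{v(0)}$. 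Combined with weak lower semicontinuity of the norm this forces $\norm{v_n(0)} \to \norm{v(0)}$, and since the ambient space $H$ in \ref{Atwo} is uniformly convex, weak convergence plus norm convergence implies strong convergence. Thus $x_n = v_n(0) \to v(0)$ strongly, and as this argument can be run on any subsequence of the original $\{x_n\}$, the sequence is relatively strongly compact.

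The main difficulty is the coordination between axioms \ref{Atwo} and \ref{Athree}: the energy inequality controls $\norm{v_n(0)}$ only through nearby times $t_0$, while strong a.e.\ convergence supplies $\norm{v_n(t_0)} \to \norm{v(t_0)}$ only on a full-measure set. Strong continuity of the complete limit $v$ is precisely what lets one then pass $t_0 \to 0$ inside $\norm{v(t_0)} + \varepsilon$ and close the estimate; without that hypothesis a jump of $\norm{v}$ at $0$ would block the argument. Everything else is routine diagonal extraction and standard Hilbert/uniformly convex space facts.
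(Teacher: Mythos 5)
Your argument is correct, but there is nothing in the paper to compare it against: Theorem \ref{globalattractor} is quoted, together with Theorems \ref{weakglobalattractor} and \ref{asymptoticcompactness}, from Cheskidov's work on evolutionary systems \cite{Cheskidov}, and the paper gives no proof of it. What you have written --- translate the trajectories so that $x_n = v_n(0)$, use \ref{Aone} in its time-shifted form plus a diagonal extraction to produce a complete limiting trajectory $v \in \mathcal{E}((-\infty, \infty))$, and then combine the energy inequality \ref{Atwo}, the a.e.\ strong convergence from \ref{Athree}, and the assumed strong continuity of $v$ to obtain $\limsup_{n}\norm{v_n(0)} \leq \norm{v(0)}$ --- is essentially the original proof given there, and your closing remark correctly identifies why the strong continuity of the \emph{complete} trajectory is the indispensable hypothesis. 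The one step worth flagging is the very last one: to pass from $d_w$-convergence of $v_n(0)$ together with convergence of norms to strong convergence, you need $d_w$-convergence on the bounded set $X$ to coincide with weak convergence in the Banach space $H$ of \ref{Atwo}, so that both the lower semicontinuity $\norm{v(0)} \leq \liminf_n \norm{v_n(0)}$ and the Radon--Riesz property of a uniformly convex space apply. This compatibility is not literally contained in the axioms as reproduced in the paper, but it is implicit in the framework and holds in the application, where $d_w$ metrizes the weak topology of $L^2$ on bounded sets; it would be worth stating explicitly in a self-contained write-up.
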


\subsection{Application to the critical CBF equations}

In \cite{Cheskidov} it was shown that all Leray--Hopf weak solutions of the space-periodic 3D NSE form an evolutionary system $\mathcal{E}$ satisfying \ref{Aone}, \ref{Atwo}, and \ref{Athree}.
We will show here that this result can be extended also to all weak solutions of the critical CBF equations. Actually, this is true also for all Leray--Hopf weak solutions of the CBF equations with the~exponent $r \geq 1$. However, only for the critical case $r = 3$, due to Theorem $\ref{twr:ee}$, there is no need to distinguish between these solutions. We begin by setting our problem into the framework of evolutionary systems.

We define the strong and weak distances by
$$ d_s(u, v) := \norm{u - v}, \quad d_w(u, v) := \sum_{k \in {\mathbb{Z}}^3}{\frac{1}{2^k}\frac{\abs{\hat{u}_k - \hat{v}_k}}{1 + \abs{\hat{u}_k - \hat{v}_k}}}, \quad u,v \in H, $$
where $\hat{u}_k$ and $\hat{v}_k$ are the Fourier coefficients of $u$ and $v$ respectively.

\begin{definition}
A ball $B_{\bullet}(0, r) \subset H$ is called \textit{a $d_{\bullet}$-absorbing ball} if for any bounded set $A \subset H$, there exists $t_0$, such that
$$ R(t)A \subset B_{\bullet}(0, r)\quad \forall \, t \geq t_0. $$
\end{definition}

For the 3D NSE it is well known that there exists an absorbing ball (see e.g.\ \cite{ConstantinFoias}). The same can be easily proved for the CBF equations.
\begin{proposition}
There exists a radius $R > 0$ such that the ball $B_s(0, R) \subset H$ is a~$d_s$-absorbing ball for the CBF equations $(\ref{alphazero})$.
\end{proposition}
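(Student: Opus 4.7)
The plan is to combine the strong energy inequality $(\ref{energyineq})$ for Leray--Hopf weak solutions with the Poincar\'e inequality (available because we work on $\mathbb{T}^3$ with zero spatial mean) in order to obtain exponential decay of $\norm{u(t)}^2$, from which the absorbing ball property is immediate.

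Starting from $(\ref{energyineq})$ with $\alpha = 0$ and zero external forcing, I would discard the nonnegative absorption contribution to obtain
$$ \norm{u(t_1)}^2 + 2\mu\int_{t_0}^{t_1}\norm{\nabla u(s)}^2 \, \mathrm{d}s \leq \norm{u(t_0)}^2 $$
for almost every starting time $t_0$ and every $t_1 > t_0$. Invoking the Poincar\'e inequality $\norm{u}^2 \leq \lambda_1^{-1}\norm{\nabla u}^2$, where $\lambda_1$ is the smallest nonzero eigenvalue of $-\Delta$ on mean-zero functions on $\mathbb{T}^3$, and then applying Gronwall's lemma in integral form, I would deduce the exponential decay estimate
$$ \norm{u(t)}^2 \leq \norm{u_0}^2 \, e^{-2\mu\lambda_1 t}, $$
valid for all $t \geq 0$ once one propagates the bound from a.e.\ starting time to all times via the weak continuity of $u$ into $H$.

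Finally, I would fix any $R > 0$ and pick an arbitrary bounded set $A \subset H$ with $M := \sup_{v \in A}\norm{v}$. Setting $t_0 := \max\lrb{0, \, (\mu\lambda_1)^{-1}\log(M/R)}$, the decay estimate gives $R(t)A \subset B_s(0, R)$ for every $t \geq t_0$, which is exactly the definition of an absorbing ball. I do not anticipate any serious obstacle: because the CBF equations $(\ref{alphazero})$ are considered here without any external forcing, the energy is strictly dissipated and in fact every positive radius $R$ works; the only mild technicality is the passage from the a.e.\ version of $(\ref{energyineq})$ to a bound valid at all times, which is entirely standard and works uniformly for all Leray--Hopf weak solutions, regardless of the exponent $r \geq 1$.
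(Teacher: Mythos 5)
Your proposal is correct and is precisely the standard argument the paper has in mind: the paper states this proposition without proof, remarking only that the well-known absorbing-ball property for the 3D NSE ``can be easily proved'' for the CBF equations, and that NSE proof is exactly your energy-inequality-plus-Poincar\'e decay estimate, with the nonnegative absorption term simply discarded. The only detail worth phrasing carefully is the Gronwall step: since $\norm{u(t)}^2$ is not a priori absolutely continuous for a general Leray--Hopf weak solution, the decay $\norm{u(t)}^2 \leq \norm{u_0}^2 e^{-2\mu\lambda_1 t}$ should be extracted from the integrated inequality $\norm{u(t_1)}^2 + 2\mu\lambda_1\int_{t_0}^{t_1}{\norm{u(s)}^2 \, \mathrm{d}s} \leq \norm{u(t_0)}^2$ (valid for a.e.\ $t_0$ including $0$) by the usual subdivision/iteration argument rather than from a differential inequality --- a moot point in the critical case $r = 3$, where Theorem \ref{twr:ee} upgrades this to an equality for all $t_0$ and, as you note, extends the conclusion from Leray--Hopf weak solutions to all weak solutions, which is what the evolutionary-system framework uses.
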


Let $X$ be a closed absorbing ball for the critical CBF equations $(\ref{alphazerocrit})$,
$$ X := \lrb{u \in H : \norm{u} \leq R}, $$
which is also weakly compact. Then for any bounded set $A \in H$ there exists a time $t_0$, such that
$$ u(t) \in X \quad \mbox{for all} \quad t \geq t_0, $$
for every weak solution $u(t)$ with the initial condition $u(0) \in A$.

Contrary to the NSE, all weak solutions of the critical CBF equations satisfy the energy inequality. In fact they verify the stronger condition $(\ref{equality})$. Therefore, we consider an evolutionary system for which a family of trajectories consists of all weak solutions (instead of all Leray--Hopf weak solutions as in \cite{Cheskidov}) of the critical convective Brinkman--Forchheimer equations in $X$. More precisely, we define
\begin{align*} \label{evolutionarycbf}
\mathcal{E}([T, \infty)) := \{&u(\cdot) : u(\cdot) \mbox{ is a weak solution on } [T, \infty) \\ \nonumber
&\mbox{ and } u(t) \in X \ \forall \, t \in [T, \infty)\}, \quad T \in \mathbb{R}, \\ \nonumber
\mathcal{E}((-\infty, \infty)) := \{&u(\cdot) : u(\cdot) \mbox{ is a weak solution on } (-\infty, \infty) \\ \nonumber
&\mbox{ and } u(t) \in X \ \forall \, t \in (-\infty, \infty)\}.
\end{align*}
Clearly, the properties $1$--$4$ of an evolutionary system $\mathcal{E}$ hold. Therefore, thanks to Theorem \ref{weakglobalattractor}, the weak global attractor $\mathcal{A}_w$ exists for this evolutionary system. Additionally, we can prove the following theorem.
\begin{theorem}
The weak global attractor $\mathcal{A}_w$ for the evolutionary system $\mathcal{E}$ of the critical CBF equations is a strongly compact strong global attractor $\mathcal{A}_s$.
\end{theorem}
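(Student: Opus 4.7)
The plan is to verify the three hypotheses \ref{Aone}--\ref{Athree} of Theorem \ref{globalattractor} for the evolutionary system $\mathcal{E}$ and to check that every complete trajectory is strongly $H$-continuous in time. Theorem \ref{globalattractor} will then give asymptotic compactness of $\mathcal{E}$, and Theorem \ref{asymptoticcompactness} will upgrade the weak global attractor $\mathcal{A}_w$ to a strongly compact strong global attractor $\mathcal{A}_s$. Two of these ingredients are essentially free: strong continuity of complete trajectories is immediate from Theorem \ref{twr:ee}, since any $u \in \mathcal{E}((-\infty, \infty))$ restricts on every bounded interval $[T_1, T_2] \subset \mathbb{R}$ to a weak solution of $(\ref{alphazerocrit})$ and hence belongs to $C([T_1, T_2]; H)$; and \ref{Atwo} follows directly from the energy equality $(\ref{equality})$, which yields $\norm{u(t)}^2 \leq \norm{u(t_0)}^2$ for all $t_0 < t$ and every $u \in \mathcal{E}([0, \infty))$.

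To establish \ref{Aone}, I would use the standard Aubin--Lions plus Arzel\`a--Ascoli machinery. From $(\ref{equality})$ and the absorbing-ball bound $\norm{u(0)} \leq R$ one obtains uniform estimates on $\mathcal{E}([0, \infty))$ in $L^{\infty}(0, \infty; H) \cap L^{2}_{\mathrm{loc}}(0, \infty; V) \cap L^{4}_{\mathrm{loc}}(0, \infty; L^{4}(\mathbb{T}^3))$, and the equation then provides a uniform bound on $\partial_t u$ in some negative-order Sobolev space (e.g.\ $L^{4/3}_{\mathrm{loc}}(0, \infty; V^{-s})$ with $s$ large enough to control $(u \cdot \nabla)u$ and $\abs{u}^2 u$). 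Interpolating these bounds gives equicontinuity in the weak metric $d_w$ on compact time intervals, and a diagonal Arzel\`a--Ascoli argument yields relative compactness in $C([0, \infty); X_w)$. Closedness under this topology reduces to passing to the limit in the weak formulation: the linear terms pose no difficulty, while Aubin--Lions produces strong convergence in $L^{2}_{\mathrm{loc}}(0, \infty; H)$ along a subsequence, which combined with the uniform $L^{4}_{t, x}$ bound handles both the convective and absorption nonlinearities.

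The main obstacle is \ref{Athree}. Given $u_n \to u$ in $C([0, T]; X_w)$, one has $u_n(t) \rightharpoonup u(t)$ in $H$ pointwise in $t$. The idea, adapted from Cheskidov's treatment of the 3D NSE, is to write $(\ref{equality})$ for each $u_n$ and for $u$ and exploit weak lower semicontinuity of the dissipation integrals $\int_0^t \norm{\nabla u_n}^2 \, \mathrm{d}s$ and $\int_0^t \norm{u_n}_{L^4(\mathbb{T}^3)}^4 \, \mathrm{d}s$, together with the monotonicity of $t \mapsto \norm{u_n(t)}^2$, in order to extract $\norm{u_n(t)} \to \norm{u(t)}$ for a.e.\ $t \in [0, T]$; the Radon--Riesz property of the Hilbert space $H$ then promotes this to strong convergence a.e. This is precisely the step where Theorem \ref{twr:ee} is essential: because \emph{every} weak solution in $\mathcal{E}$, not merely a selection of Leray--Hopf ones, satisfies the energy equality, the argument applies uniformly to the approximating sequence and to its limit, with no further selection needed. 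Once \ref{Aone}--\ref{Athree} and the strong continuity of complete trajectories are in hand, Theorems \ref{globalattractor} and \ref{asymptoticcompactness} combine to identify $\mathcal{A}_w$ with a strongly compact strong global attractor $\mathcal{A}_s$.
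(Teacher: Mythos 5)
Your overall architecture coincides with the paper's: verify \ref{Aone}--\ref{Athree} together with strong continuity of complete trajectories (the latter immediate from Theorem \ref{twr:ee}), then invoke Theorems \ref{globalattractor} and \ref{asymptoticcompactness}. Your treatments of \ref{Aone}, \ref{Atwo} and of the continuity of complete trajectories match the paper's in substance (the paper outsources the compactness estimates behind \ref{Aone} to Lemma $8.5$ of \cite{Cheskidov} and then diagonalises, rather than re-deriving the Aubin--Lions/Arzel\`a--Ascoli machinery, but the content is the same).

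The gap is in your verification of \ref{Athree}. From $u_n \to u$ in $C([0, T]; X_w)$ you only get $u_n(0) \rightharpoonup u(0)$, so the initial energies need not converge: $\norm{u(0)}^2 \leq \liminf_n \norm{u_n(0)}^2$, possibly strictly. Writing the energy equality $(\ref{equality})$ for each $u_n$ and for $u$, and using weak lower semicontinuity of the two dissipation integrals, only shows that the energy defect $\delta(t) := \lim_n \norm{u_n(t)}^2 - \norm{u(t)}^2$ (defined along a subsequence where the monotone functions $\norm{u_n(\cdot)}^2$ converge pointwise, e.g.\ by Helly's theorem) satisfies $\delta(t) \leq \delta(0)$; nothing in this balance forces $\delta$ to vanish for a.e.\ $t$, since it may already be positive at $t = 0$ and is then permitted to persist. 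The mechanism that actually kills the defect is not the energy balance but parabolic compactness: the uniform bounds on $u_n$ in $L^2(0, T; V)$ and on $\partial_t u_n$ in $L^{4/3}(0, T; V')$ give, via Aubin--Lions, strong convergence of a subsequence in $L^2(0, T; H)$, hence $\norm{u_{n_j}(t)} \to \norm{u(t)}$ for a.e.\ $t \in [0, T]$. This is exactly how the paper proves \ref{Athree}, and it is the same tool you already deploy for the closedness part of \ref{Aone}; with that substitution the rest of your argument goes through.
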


\begin{proof}
Since every complete trajectory of the evolutionary system $\mathcal{E}$ for the critical CBF equations is strongly continuous, due to Theorem \ref{asymptoticcompactness} and Theorem \ref{globalattractor}, it is enough to prove that $\mathcal{E}$ satisfies the assumptions \ref{Aone}, \ref{Atwo}, and \ref{Athree}.

First note that $\mathcal{E}([0, \infty)) \subset C([0, \infty); H_w)$ by the definition of weak solutions, see $(\ref{weakctn})$. Now take any sequence $u_n \in \mathcal{E}([0, \infty))$ for $n = 1, 2, \dots$. Thanks to classical estimates for Leray--Hopf weak solutions of the NSE (Lemma $8.5$ in \cite{Cheskidov}, for more details see \cite{ConstantinFoias}), which apply also to the CBF equations, there exists a subsequence, still denoted by $u_n$, that converges to some $u^1 \in \mathcal{E}([0, \infty))$ in $C([0, 1]; H_w)$ as $n \to \infty$. Passing to a subsequence  and dropping a subindex once more, we obtain that $u_n \to u^2$ in $C([0, 2]; H_w)$ as $n \to \infty$ for some $u^2 \in~\mathcal{E}([0, \infty))$. Note that $u^1(t) = u^2(t)$ on $[0, 1]$. Continuing this diagonalisation procedure, we obtain a~subsequence $\lrb{u_{n_j}} \subset \lrb{u_n}$ that converges to some $u \in \mathcal{E}([0, \infty))$ in $C([0, \infty); H_w)$ as $n_j \to \infty.$ Therefore, \ref{Aone} holds.

The energy inequality \ref{Atwo} follows immediately from the energy equality $(\ref{equality})$. Note also that to prove \ref{Atwo} it is enough to have only the strong energy inequality $(\ref{energyineq})$.

Let now $u_n, u \in \mathcal{E}([0, T])$ be such that $u_n \to u$ in $C([0, \infty); H_w)$ as $n \to \infty$ for some $T > 0$. Classical estimates for the NSE (see e.g. \cite{ConstantinFoias} or \cite{RRSbook}), which hold as well for the CBF equations, imply that the sequence $\lrb{\partial_tu_n}$ is bounded in $L^{4/3}(0, T; V')$, where $V'$ is the dual space of $V$. Since the sequence $\lrb{u_n}$ is bounded in $L^2(0, T; V)$, by the Aubin--Lions Lemma, there exists a subsequence $\lrb{u_{n_j}} \subset \lrb{u_n}$, such that
$$ \calkat{\norm{u_{n_j}(t) - u(t)}^2} \to 0 \quad \mbox{as} \quad {n_j} \to \infty. $$
In particular, $\norm{u_{n_j}(t)} \to \norm{u(t)}$ as ${n_j} \to \infty$ for a.e. $t \in [0, T]$, i.e.\ \ref{Athree} holds.
\end{proof}

Finally, we note that all the other results from \cite{Cheskidov} apply to the three-dimensional convective Brinkman--Forchheimer equations $(\ref{alphazero})$ as well. For instance, \textit{the~trajectory attractor} $\mathcal{U}$ exists for the critical CBF equations, and uniformly attracts $\mathcal{E}([0, \infty))$ in $L_{\mathrm{loc}}^{\infty}((0, \infty); H)$.

\bibliography{biblio}
\bibliographystyle{acm}

\end{document}